\theoremstyle{plain}
\newtheorem{thm}{Theorem}[section]
\newtheorem{lem}[thm]{Lemma}
\newtheorem{prop}[thm]{Proposition}
\newtheorem{cor}[thm]{Corollary}
\theoremstyle{definition}
\newtheorem{defn}[thm]{Definition}
\newtheorem{eg}[thm]{Example}
\theoremstyle{remark}
\newtheorem{rmk}[thm]{Remark}
\DeclareMathOperator{\Stab}{Stab}
\DeclareMathOperator{\Endo}{End}
\DeclareMathOperator{\Auto}{Aut}
\DeclareMathOperator{\EllO}{Ell_{\mathcal{O}}}
\title{Ordinary Isogeny Graphs with Level Structure}
\author{Derek Perrin}
\author{Jos\'e Felipe Voloch}
\address{School of Mathematics and Statistics, University of Canterbury, Private Bag 4800, Christchurch 8140, New Zealand}
\email{derek.perrin@pg.canterbury.ac.nz}
\email{felipe.voloch@canterbury.ac.nz}
\thanks{The authors thank W. Castryck, S. Galbraith, and the anonymous referee for comments and MBIE for support}
\keywords{Elliptic curves, level structure, isogeny graphs, class field theory}
\subjclass{11G20, 14K02}
\begin{document}

\maketitle

\begin{abstract}
  We study $\ell$-isogeny graphs of ordinary elliptic curves defined over $\mathbb{F}_q$ with an added level structure. Given an integer $N$ coprime to $p$ and $\ell,$ we look at the graphs obtained by adding $\Gamma_0(N),$ $\Gamma_1(N),$ and $\Gamma(N)$-level structures to volcanoes. Given an order $\mathcal{O}$ in an imaginary quadratic field $K,$ we look at the action of generalised ideal class groups of $\mathcal{O}$ on the set of elliptic curves whose endomorphism rings are $\mathcal{O}$ along with a given level structure. We show how the structure of the craters of these graphs is determined by the choice of parameters.
\end{abstract}

\section{Introduction}\label{sec:introduction}
 The set of isomorphism classes of elliptic curves over a finite field $\mathbb{F}_q$ can be viewed as vertices of a graph known as the isogeny graph. Vertices $E$ and $E'$ are connected by a directed edge $(E, E')$ if and only if there exists an isogeny $\varphi: E \to E'.$ Advancements in quantum computing have motivated the search for new cryptographic primitives which are resistant to quantum-style attacks. The search for problems which are computationally difficult for quantum adversaries gave birth to what is known as \emph{isogeny-based cryptography}~\cite{charles-lauter-goren}. Given two vertices $E$ and $E',$ the underlying hard problem of isogeny-based cryptography is that of finding a path from $E$ to $E'$ in the isogeny graph.

Cryptographic applications of isogenies has attracted more attention to studying them. One such development in the field of isogenies has been to look at adding a level $N$ structure to the isogeny graph. This graph consists of vertices of the form $(E, \gamma)$ where $\gamma$ is some level $N$ structure on $E,$ and there is an edge between two vertices $(E, \gamma),$ $(E', \gamma')$ if and only if there exists an isogeny $\varphi: E \to E'$ with the added condition that $\varphi(\gamma) = \gamma'.$ Isogeny graphs with level $N$ structure were first studied by Roda in~\cite{Roda-thesis} in which they looked at adding a full level $N$ structure to the supersingular component of the $\ell$-isogeny graph. Following Roda, further works have also studied adding level structure to the supersingular component; see~\cite{Shai,Arpin-level-structure}. Additionally, during the writing of this paper, Arpin, Castryck, Eriksen, Lorenzon, and Vercauteren~\cite{class-group-action-oriented} published results involving generalised class group actions on oriented elliptic curves with level structure. Prior to that, Col\`o~\cite{Colo} pointed out that the action of the ideals on elliptic curves with level structure factors through a ray class group.

Parallel to these developments, the isogeny graphs for ordinary elliptic curves was studied, motivated by problems in computational number theory. Kohel~\cite{Kohel} showed that the ordinary components have a nice ``volcano'' structure, where the term volcano was later coined by Fouquet and Morain~\cite{Fouquet-Morain} to describe the components due to their similarities to geological volcanoes. Sutherland~\cite{Sutherland-volcanoes} provides a summary of various algorithms, including computing the endomorphism ring of ordinary elliptic curves, which make use of volcanoes.

Due to their applications in cryptography, the supersingular setting has received more attention and the ordinary setting is not as well-studied. However, recent work by Lei and M\"uller~\cite{Lei-Muller} studies ordinary isogeny graphs with a $\Gamma_1(Np^m)$-level structure. Our work differs as we will consider ordinary isogeny graphs with $\Gamma_0(N),$ $\Gamma_1(N),$ and $\Gamma(N)$-level structures.

The viewpoint of our work is more mathematical in nature, and we will not discuss cryptographic applications here. For use of level structures in cryptography, the interested reader is recommended to look at~\cite{CSIDH-w-LS, defeo-level-structure}. This paper aims to answer a question first raised by Levin~\cite{Shai}, namely that of determining the crater size and number of components of the graphs constructed by adding various level structures to an ordinary component of the $\ell$-isogeny graph.

The theory of complex multiplication tells us that for an order $\mathcal{O}$ in an imaginary quadratic field $K,$ the ideal class group $Cl(\mathcal{O})$ acts simply transitively on the set
\[
  \EllO(\mathbb{F}_{q}) = \left\{ E/\mathbb{F}_q \mid \Endo(E) \simeq \mathcal{O} \right\} /\,\overline{\mathbb{F}_q}\text{-isomorphism}
\]
of ordinary elliptic curves whose endomorphism rings are isomorphic to $\mathcal{O}$ modulo $\overline{\mathbb{F}_q}$-isomorphism. The main contribution of this work can be thought of as a generalisation of this group action. We address Levin's question by considering generalised ideal class groups acting on $\EllO(\mathbb{F}_q)$ endowed with various level structures. For $\mathfrak{l}$ a prime $\mathcal{O}$-ideal above $\ell,$ we show that the crater size of components with a level structure is related to the order of the subgroup $\left\langle \mathfrak{l}, \overline{\mathfrak{l}} \right\rangle$ when viewed as a subgroup of a generalised class group. In \Cref{sec:class-field-theory}, we give descriptions of these class groups for $\Gamma_0(N),$ $\Gamma_1(N),$ and $\Gamma(N)$-level structures. We also show the role the prime factorisation of $N$ as an $\mathcal{O}$-ideal plays in both the number and size of the components of these graphs with level structure. In \Cref{sec:examples}, we conclude by showing several examples of adding $\Gamma_0(N)$ and $\Gamma_1(N)$-level structures to an isogeny volcano.

\section{Background}\label{sec:background}
\begin{defn}
  Let $\mathbb{F}_q$ be a finite field of characteristic $p \neq 2,3$ and $\ell$ a prime different from $p.$ Recall that the set of $\overline{\mathbb{F}_q}$-isomorphism classes of elliptic curves over $\mathbb{F}_q$ can be identified with $\mathbb{F}_q$ via the $j$-invariant. We let $X_{\ell}(\mathbb{F}_q)$ denote the \emph{$\ell$-isogeny graph of $\mathbb{F}_q$} where $X_{\ell}(\mathbb{F}_q)$ has vertex set $\mathbb{F}_q.$ Each vertex represents an isomorphism class of elliptic curves defined over $\mathbb{F}_q.$ The edge set of $X_{\ell}(\mathbb{F}_q)$ consists of pairs $(j_1, j_2)$ where $j_1, j_2$ are $j$-invariants of elliptic curves which are $\overline{\mathbb{F}_q}$-isogeneous via an isogeny of degree $\ell.$
\end{defn}
The field we work with will generally be understood to be $\mathbb{F}_q,$ so we will write $X_{\ell}$ instead of $X_{\ell}(\mathbb{F}_q).$ Unless otherwise stated, we will treat $X_{\ell}$ as an undirected graph since given an isogeny $\varphi: E \to E'$ of degree $\ell,$ there exists a dual isogeny $\hat{\varphi}: E' \to E$ of degree $\ell$ such that $\varphi \circ \hat{\varphi} = \hat{\varphi} \circ \varphi = [\ell].$

For the purposes of this paper, we will focus our attention on the ordinary components, volcanoes, of $X_{\ell}.$
\begin{defn}
  An \emph{$\ell$-volcano} is a connected, undirected graph whose vertices may be partitioned into one or more levels $V_0, \cdots, V_d$  such that:
  \begin{enumerate}[label=(\roman*)]
  \item The subgraph on $V_0$ is a regular connected graph of degree at most 2.
  \item For each $i > 0,$ the vertices in $V_i$ have exactly one neighbour in $V_{i-1}$ and the subgraph on $V_i$ is totally disconnected.
  \item For each $i < d,$ the vertices in $V_i$ have degree $\ell + 1.$
  \end{enumerate}
\end{defn}
\begin{defn}
Let $G_{\ell}$ be an $\ell$-isogeny volcano of $X_{\ell}.$ The subgraph of level zero on $G_{\ell}$ is called the \emph{crater of $G_{\ell}$} and is denoted $C_\ell$ and the subgraph of $X_{\ell}$ consisting of the craters of all ordinary components will be denoted $\mathcal{C}_{\ell}$ and is called the \emph{crater of $X_{\ell}$}.
\end{defn}
The term \emph{crater} was introduced in~\cite{Fouquet-Morain} and strictly refers to the level zero subgraph of a single volcano. Some definitions of a volcano \cite{distort-volcano} require the crater to consist of a cycle, but we do not require this in our work and use a definition as in \cite{Sutherland-volcanoes}. Many of the graphs we work with contain multiple components with a volcanic structure, so it makes sense to discuss the craters of all of $X_{\ell}.$ This terminology is consistent with what the authors of~\cite{Lei-Muller} use.

We can now add an additional structure to the ordinary components of $X_{\ell}.$ The structure we impose is called a \emph{level structure} and it is related to the $N$-torsion subgroup $E[N]$ of an elliptic curve $E.$
\begin{defn}\label{def:level-structures}
  Let $E/\mathbb{F}_q$ be an elliptic curve, $N$ an integer, and $E[N]$ the group of $N$-torsion points of $E$. Then we call
  \begin{enumerate}
    \item a \emph{$\Gamma_0(N)$-level structure} a cyclic subgroup $\left\langle P \right\rangle \subseteq E[N]$ of order $N$;
    \item a \emph{$\Gamma_1(N)$-level structure} a point $P \in E[N]$ of order $N;$
    \item and a \emph{$\Gamma(N)$-level structure}, or \emph{full level structure}, an ordered pair of points $P, Q \in E[N]$ which are a basis for $E[N].$
  \end{enumerate}
\end{defn}
In this paper, we will always assume $N$ is an integer coprime to $p$ and $\ell.$
\begin{defn}\label{def:vertex-equivalence}
  Consider the set of pairs $(E, \gamma)$ where $E$ is an ordinary elliptic curve and $\gamma$ is a level structure on $E.$ Then we may define an equivalence relation $\sim$ on pairs $(E, \gamma),$  where $(E, \gamma) \sim (E', \gamma')$ if and only if there exists an isomorphism $\varphi: E \to E'$ such that $\varphi(\gamma) = \gamma'.$
\end{defn}
Note that if $\varphi \in \Auto(E),$ then $(E, \gamma)$ is equivalent to $(E, \varphi(\gamma)).$ In particular this applies to $[-1].$

We add a level structure to $X_{\ell}$ by adding the level structure to each vertex in $X_{\ell}$ up to the equivalence relation $\sim$ in \Cref{def:vertex-equivalence}.
\begin{defn}\label{defn:level-struct-volcanoes}
  Let $G_{\ell}$ be an isogeny volcano with vertex set $\mathbb{V} = {E_i}$. Up to the equivalence relation in \cref{def:vertex-equivalence}, consider the set of tuples
  \[
    \mathbb{V}' = \left\{(E_i, \gamma_i) \mid \forall E_i \in \mathbb{V} \text{ where } \gamma_i \text{ is a level structure on } E_i \right\} / \sim
  \]
  and the set of tuples
  \begin{align*}
    \mathbb{E}' = \bigl\{&((E_i, \gamma_i), (E_j, \gamma_j)) \text{ if } \exists \varphi: E_i \to E_j \text{ such that } \deg\varphi = \ell\\
    & \text{ and } \varphi(\gamma_i) = \varphi(\gamma_j) \bigr\} \big/ \sim.
  \end{align*}
  Then
  \begin{enumerate}[label=(\roman*)]
  \item if the $\gamma_i$ are all $\Gamma_0(N)$ level structures on the $E_i$'s, we let $G_{\ell,0}(N)$ denote the graph whose vertex set is $\mathbb{V}'$ and edge set is $\mathbb{E}'$;
  \item if the $\gamma_i$ are all $\Gamma_1(N)$ level structures on the $E_i$'s, we let $G_{\ell,1}(N)$ denote the graph whose vertex set is $\mathbb{V}'$ and edge set is $\mathbb{E}'$;

  \item if the $\gamma_i$ are all $\Gamma(N)$ level structures on the $E_i$'s, we let $G_{\ell}(N)$ denote the graph whose vertex set is $\mathbb{V}'$ and edge set is $\mathbb{E}'$.
  \end{enumerate}
\end{defn}
We will say we \emph{add a $\gamma$ level structure to $G_{\ell}$} when we construct one of the graphs in \Cref{defn:level-struct-volcanoes}. We also remark that while $G_{\ell,0}(N)$ remains undirected, the  graphs $G_{\ell,1}(N)$ and $G_{\ell}(N)$ in \Cref{defn:level-struct-volcanoes} are no longer undirected graphs.
\section{Stabilisers of Level Structures}\label{sec:stabilisers}
In this section, we will discuss the stabilisers of the $\Gamma_0(N),$ $\Gamma_1(N),$ and $\Gamma(N)$-level structures.

We will let $K$ be an imaginary quadratic field with ring of integers given by $\mathcal{O}_K.$ We are interested in ordinary elliptic curves with complex multiplication (CM) by an order $\mathcal{O} \subseteq \mathcal{O}_K$ where $\mathcal{O} = \mathbb{Z}[\Phi]$ for some $\Phi \in \mathcal{O}_{K};$ in other words, their endomorphism ring is equal to $\mathcal{O}.$ We will restrict ourselves to the case of $(f, N) = 1$ where $f$ is the conductor of $\mathcal{O}$ (i.e. $f = [\mathcal{O}_K : \mathcal{O}]$). Finally, we will assume that all elliptic curves we work with have $j$-invariant not equal to $0, 1728.$
\begin{defn}
Let $\gamma$ be a $\Gamma_0(N),$ $\Gamma_1(N),$ or $\Gamma(N)$-level structure on an ordinary elliptic curve $E.$ A \emph{stabiliser of $\gamma$} is an element $\alpha$ of $\mathcal{O}/N \mathcal{O}$ where $(E, \gamma) \sim (E, \alpha(\gamma))$ where $\sim$ is the equivalence relation defined in \Cref{def:vertex-equivalence}. The set $\Stab(\gamma)$ of stabilisers of $\gamma$ form a subgroup of $(\mathcal{O}/N \mathcal{O})^{\times}.$
\end{defn}
\begin{lem}\label{lem:O/N isomorphism}
  The $N$-torsion subgroup $E[N]$ and $\mathcal{O}/N\mathcal{O}$ are isomorphic as $\mathcal{O}/N\mathcal{O}$-modules.
\end{lem}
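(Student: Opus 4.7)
The plan is to prove that $E[N]$ is a cyclic $\mathcal{O}/N\mathcal{O}$-module, which by a cardinality count forces the desired isomorphism: both sides have $N^2$ elements ($E[N]\cong(\mathbb{Z}/N\mathbb{Z})^2$ as an abelian group since $\gcd(N,p)=1$, and $\mathcal{O}$ has $\mathbb{Z}$-rank $2$), so any surjective $\mathcal{O}/N\mathcal{O}$-module map $\mathcal{O}/N\mathcal{O}\twoheadrightarrow E[N]$ (produced by picking a generator of $E[N]$) is automatically an isomorphism.

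By the Chinese Remainder Theorem applied to both $E[N]=\bigoplus_{p\mid N}E[p^{v_p(N)}]$ and $\mathcal{O}/N\mathcal{O}\cong\prod_{p\mid N}\mathcal{O}/p^{v_p(N)}\mathcal{O}$, it suffices to prove $E[p^k]$ is cyclic over $\mathcal{O}/p^k\mathcal{O}$ for each prime power $p^k$ exactly dividing $N$. The hypothesis $\gcd(N,f)=1$ yields $\mathcal{O}/p^k\mathcal{O}\cong\mathcal{O}_K/p^k\mathcal{O}_K$ (the natural map induced by inclusion is an isomorphism because $\mathcal{O}_K/\mathcal{O}$ has order $f$ coprime to $p$), so we may work with the maximal order. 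A Nakayama argument on the Artinian semilocal ring $\mathcal{O}_K/p^k\mathcal{O}_K$ further reduces the cyclicity of $E[p^k]$ to that of $E[p]$ over $\mathcal{O}_K/p\mathcal{O}_K$.

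The remaining step is a case split on the splitting behaviour of $p$ in $\mathcal{O}_K$. When $p$ splits as $\mathfrak{p}\overline{\mathfrak{p}}$, the decomposition $\mathcal{O}_K/p\mathcal{O}_K\cong\mathbb{F}_p\times\mathbb{F}_p$ matches $E[p]=E[\mathfrak{p}]\oplus E[\overline{\mathfrak{p}}]$ into cyclic summands of order $p$, giving a free module of rank $1$. When $p$ is inert, $\mathcal{O}_K/p\mathcal{O}_K\cong\mathbb{F}_{p^2}$ and $E[p]$ is an $\mathbb{F}_{p^2}$-vector space of cardinality $p^2$, hence $1$-dimensional. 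The main obstacle is the ramified case $p\mathcal{O}_K=\mathfrak{p}^2$: here $\mathcal{O}_K/p\mathcal{O}_K\cong\mathbb{F}_p[\pi]/(\pi^2)$, and one must confirm that the uniformiser $\pi$ acts nontrivially on $E[p]$. This follows from the standard identity $|E[\mathfrak{p}]|=|\mathcal{O}_K/\mathfrak{p}|=p$ (visible analytically from $E[\mathfrak{p}]=\mathfrak{a}\mathfrak{p}^{-1}/\mathfrak{a}$ for $E=\mathbb{C}/\mathfrak{a}$ after Deuring lifting, since the isogeny $E\to E/E[\mathfrak{p}]$ has degree $N(\mathfrak{p})$), which forces $E[\mathfrak{p}]\subsetneq E[p]$ and hence $\pi$ to act nonzero on the quotient.
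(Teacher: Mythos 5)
Your argument is correct, and it supplies a self-contained proof where the paper itself simply cites Silverman's \emph{Advanced Topics} Prop.~II.1.4 for the maximal-order case and Arpin--Castryck--Eriksen--Lorenzon--Vercauteren, Lemma~3.1.1, for the non-maximal case. Your reduction chain is clean and standard: CRT to reduce to prime powers; the coprimality $\gcd(N,f)=1$ to identify $\mathcal{O}/p^k\mathcal{O}$ with $\mathcal{O}_K/p^k\mathcal{O}_K$ (surjectivity of the natural map from $\mathcal{O}+p^k\mathcal{O}_K=\mathcal{O}_K$, and then equal finite cardinalities force injectivity); a Nakayama argument, applied with $p$ in the Jacobson radical of $\mathcal{O}_K/p^k\mathcal{O}_K$, to drop to $k=1$; and a case split on the splitting type of $p$. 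You correctly identify the ramified case as the one requiring real content and handle it correctly: $|E[\mathfrak{p}]|=\mathcal{N}(\mathfrak{p})=p<p^2=|E[p]|$ shows the uniformiser acts nontrivially, and a module of size $p^2$ over $\mathbb{F}_p[\pi]/(\pi^2)$ on which $\pi$ acts nonzero must be free of rank one. The one place you could tighten the exposition is the passage to $\mathcal{O}_K$-ideals: since $\Endo(E)=\mathcal{O}$ rather than $\mathcal{O}_K$, the $\mathcal{O}_K$-module structure on $E[p^k]$ is only via transport of structure along the ring isomorphism $\mathcal{O}/p^k\mathcal{O}\cong\mathcal{O}_K/p^k\mathcal{O}_K$, and the intrinsic objects are the proper $\mathcal{O}$-ideals above $p$ (which correspond bijectively, with matching norms, to the $\mathcal{O}_K$-ideals above $p$ because $p\nmid f$); likewise the Deuring lift has CM by $\mathcal{O}$, so the lattice $\mathfrak{a}$ and the kernel computation should be phrased with $\mathcal{O}$-ideals before translating. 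None of this affects the validity of the argument, but making it explicit would avoid conflating the two orders. Compared to the citations, your proof has the pedagogical advantage of isolating exactly where the hypotheses $\gcd(N,p)=1$ and $\gcd(N,f)=1$ enter, at the modest cost of invoking the Deuring lift for the norm count $|E[\mathfrak{p}]|=\mathcal{N}(\mathfrak{p})$.
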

\begin{proof}
 A proof when $\mathcal{O} = \mathcal{O}_K$ is given in~\cite[Prop.~II.1.4]{AAEC}. For $\mathcal{O}$ a non-maximal order, a proof is given in~\cite[3.1~Lemma~1]{class-group-action-oriented}.
\end{proof}

The importance of \Cref{lem:O/N isomorphism} is that it allows us to work with a somewhat simpler structure as we will shortly see. For the three level structures $\Gamma(N),$ $\Gamma_1(N),$ and $\Gamma_0(N),$ we are interested in stabilisers of all of $E[N],$ points $P \in E[N]$ of order $N,$ and cyclic subgroups $\left\langle P \right\rangle \subseteq E[N]$ of order $N$ respectively. The authors of~\cite{CSIDH-w-LS} find the stabilisers of a $\Gamma(N)$-level structure by considering subgroups of the kernel of an isogeny which fixes all of $E[N];$ this method can be adapted for $\Gamma_1(N)$ and $\Gamma_0(N)$-level structures, but one needs to take into consideration eigenvectors of $\Phi.$ We instead look at the equivalent problem of finding stabilisers for all of $\mathcal{O}/N\mathcal{O},$ elements $\alpha \in \mathcal{O}/N\mathcal{O}$ of order $N,$ and cyclic subgroups $\left\langle \alpha \right\rangle \subseteq \mathcal{O}/N\mathcal{O}$ of order $N$ respectively.

\begin{prop}\label{prop:stab-full-level}
  The stabilisers of a $\Gamma(N)$-level structure are given by
  \[
    \Stab(\mathcal{O}/N\mathcal{O}) = \left\{ \alpha \in (\mathcal{O}/N \mathcal{O})^{\times} \mid \alpha \equiv \pm 1 \mod N\mathcal{O} \right\}.
  \]
\end{prop}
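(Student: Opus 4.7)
The plan is to reduce the problem to a computation in the ring $\mathcal{O}/N\mathcal{O}$ by combining \Cref{lem:O/N isomorphism} with the fact that $\Auto(E) = \{\pm 1\}$ whenever $j(E) \neq 0, 1728$ in characteristic $\neq 2, 3$.

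First, I would unpack what it means for $\alpha \in (\mathcal{O}/N\mathcal{O})^{\times}$ to stabilise a $\Gamma(N)$-level structure $\gamma = (P, Q)$. By \Cref{def:vertex-equivalence}, we need an automorphism $\varphi : E \to E$ such that $\varphi(P) = \alpha P$ and $\varphi(Q) = \alpha Q$. Because $(P, Q)$ is an $\mathcal{O}/N\mathcal{O}$-module generating set for $E[N]$, this forces $\varphi$ to act on all of $E[N]$ as multiplication by $\alpha$.

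Next, I invoke the standing hypothesis $j(E) \neq 0, 1728$. Together with $\mathrm{char}(\mathbb{F}_q) \neq 2, 3$, this gives $\Auto(E) = \{[1], [-1]\}$, so the automorphism $\varphi$ must induce $\pm 1$ on $E[N]$. Via the isomorphism $E[N] \simeq \mathcal{O}/N\mathcal{O}$ of $\mathcal{O}/N\mathcal{O}$-modules supplied by \Cref{lem:O/N isomorphism}, this translates into: multiplication by $\alpha$ on $\mathcal{O}/N\mathcal{O}$ equals multiplication by $\pm 1$. Since $\mathcal{O}/N\mathcal{O}$ is free of rank one over itself, evaluating at $1$ shows this is equivalent to $\alpha \equiv \pm 1 \pmod{N\mathcal{O}}$, which gives the claimed description; the reverse inclusion is immediate since $[\pm 1]$ are genuine automorphisms of $E$.

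The main obstacle here is conceptual rather than technical: one must be careful that the level structure $\gamma$ truly generates $E[N]$, which is precisely the feature distinguishing the $\Gamma(N)$ case from the $\Gamma_1(N)$ and $\Gamma_0(N)$ cases, where the level structure spans only a cyclic submodule and the stabiliser condition accordingly becomes looser. Everything else is a direct unwinding of the definitions once \Cref{lem:O/N isomorphism} and the description of $\Auto(E)$ are available.
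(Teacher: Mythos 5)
Your proof is correct and takes essentially the same approach as the paper: the paper's proof is simply the one-line remark that the claim follows ``trivially'' from the equivalence relation in \Cref{def:vertex-equivalence}, and your argument spells out exactly why, unpacking the equivalence to an automorphism of $E$, invoking $\Auto(E) = \{\pm 1\}$ from the standing hypothesis $j(E) \neq 0, 1728$, and using that $(P,Q)$ generates $E[N]$ over $\mathcal{O}/N\mathcal{O}$ to pass from an equality of module maps to the congruence $\alpha \equiv \pm 1 \pmod{N\mathcal{O}}$.
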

\begin{proof}
  Due to the equivalence relation in \Cref{def:vertex-equivalence}, we trivially have $\alpha \equiv \pm1 \mod N\mathcal{O}.$
\end{proof}
Let $\tilde{P} \in E[N]$ be a $\Gamma_1(N)$-level structure. After choosing a generator of $E[N]$ as an $\mathcal{O}/N \mathcal{O}$-module, then by \Cref{lem:O/N isomorphism} we may identify $\tilde{P}$ with an element $P \in \mathcal{O}/N\mathcal{O}.$ In what follows, we will work with $P$ instead of $\tilde{P}.$
\begin{prop}\label{prop:stab-gamma-1}
  Given $P \in \mathcal{O}/N\mathcal{O}$ of order $N$ and the ideal $\mathfrak{a} = \langle P, N\mathcal{O} \rangle,$ then
    \[
      \Stab(P) = \left\{ \alpha \in (\mathcal{O}/N \mathcal{O})^{\times} \mid \alpha \equiv \pm 1 \mod \mathfrak{a}^{-1}N\mathcal{O} \right\}.
    \]
\end{prop}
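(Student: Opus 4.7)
The plan is to reduce the stabiliser condition to a membership question for an ideal annihilator. Since we assume $j \neq 0,1728$, we have $\Auto(E)=\{\pm 1\}$. Under the identification from \Cref{lem:O/N isomorphism}, the equivalence relation $\sim$ translates to: an element $\alpha\in(\mathcal{O}/N\mathcal{O})^{\times}$ is a stabiliser of $P$ if and only if $\alpha P = \varepsilon P$ in $\mathcal{O}/N\mathcal{O}$ for some $\varepsilon\in\{\pm 1\}$. This is equivalent to $(\alpha-\varepsilon)P \equiv 0\pmod{N\mathcal{O}}$, i.e.\ to $(\alpha-\varepsilon)$ lying in the annihilator of $P$ in $\mathcal{O}$, viewed modulo $N\mathcal{O}$.

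The core of the argument is therefore the identification
\[
\mathrm{Ann}_{\mathcal{O}}(P) \;=\; \mathfrak{a}^{-1}N\mathcal{O},
\]
where $\mathfrak{a}=\langle P, N\mathcal{O}\rangle$ is viewed as an ideal of $\mathcal{O}$ after choosing a lift of $P$. First I would note that $\mathfrak{a}$ is invertible: it contains $N\mathcal{O}$, and since $(f,N)=1$ it is coprime to the conductor of $\mathcal{O}$, so invertibility is automatic. Then for $r\in\mathcal{O}$, one has $rP\in N\mathcal{O}$ iff $r(\mathcal{O}P+N\mathcal{O})\subseteq N\mathcal{O}$, i.e.\ $r\mathfrak{a}\subseteq N\mathcal{O}$, which by invertibility of $\mathfrak{a}$ is equivalent to $r\in N\mathcal{O}\cdot\mathfrak{a}^{-1}$. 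Because $N\mathcal{O}\subseteq\mathfrak{a}$, we have $\mathfrak{a}^{-1}N\mathcal{O}\subseteq\mathcal{O}$, so this is a genuine ideal of $\mathcal{O}$ containing $N\mathcal{O}$.

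Combining the two steps, $\alpha\in\Stab(P)$ iff $\alpha-\varepsilon\in\mathfrak{a}^{-1}N\mathcal{O}$ modulo $N\mathcal{O}$ for some $\varepsilon=\pm 1$, which is precisely the congruence $\alpha\equiv\pm 1\pmod{\mathfrak{a}^{-1}N\mathcal{O}}$ in the statement. Intersecting with $(\mathcal{O}/N\mathcal{O})^{\times}$ then gives the claimed description of $\Stab(P)$.

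The main obstacle I expect is verifying the annihilator computation cleanly: one needs the invertibility of $\mathfrak{a}$, which in turn relies on $\mathfrak{a}$ being coprime to the conductor $f$, a fact guaranteed by $N\in\mathfrak{a}$ and the running assumption $(f,N)=1$. Everything else is a direct translation between the point-theoretic picture on $E[N]$ and the ring-theoretic picture on $\mathcal{O}/N\mathcal{O}$ provided by \Cref{lem:O/N isomorphism}.
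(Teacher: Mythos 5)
Your argument is essentially the same as the paper's: you show that $(\alpha \mp 1)P \in N\mathcal{O}$ forces $(\alpha \mp 1)\mathfrak{a} \subseteq N\mathcal{O}$, hence $(\alpha \mp 1)\in\mathfrak{a}^{-1}N\mathcal{O}$, with the converse direct, which is precisely the paper's computation framed as the identification $\mathrm{Ann}_{\mathcal{O}/N\mathcal{O}}(P)=\mathfrak{a}^{-1}N\mathcal{O}/N\mathcal{O}$. Your version is a bit more careful in that it explicitly notes the invertibility of $\mathfrak{a}$ (via coprimality of $N$ with the conductor), a point the paper uses silently when writing $\mathfrak{a}^{-1}$.
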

\begin{proof}
  Let $\alpha \in \Stab(P).$ Then
  \begin{align*}
    \alpha P &\equiv \pm P \mod N\mathcal{O} \\
    (\alpha \pm 1)P &\equiv 0 \mod N\mathcal{O}
  \end{align*}
  Then $(\alpha \pm 1)P \in N \mathcal{O}.$ For any element $\beta = aP + bN \in \mathfrak{a}$ for $a,b \in \mathcal{O},$ consider the product
  \begin{align*}
    (\alpha \pm 1)\beta &= (\alpha \pm 1)(aP + bN) \\
                        &= a(\alpha \pm 1)P + b(\alpha \pm 1)N.
  \end{align*}
  We have shown the first term is in $N \mathcal{O},$ and the second term is certainly in $N \mathcal{O},$ so their sum is in $N \mathcal{O}.$ Then $(\alpha \pm 1)\mathfrak{a} \subseteq N \mathcal{O},$ and so $(\alpha \pm 1) \in \mathfrak{a}^{-1}N \mathcal{O},$ and $\Stab(P) \subseteq \left\{ \alpha \in \mathcal{O}/N \mathcal{O} \mid \alpha \equiv \pm 1 \mod \mathfrak{a}^{-1}N \mathcal{O} \right\}.$ The reverse containment is clear.
  \end{proof}

\begin{prop}\label{prop:stab-gamma-0}
  Let $G = \left\langle P \right\rangle$ be a $\Gamma_0(N)$-level structure and $\mathfrak{a} = \left\langle P, N \mathcal{O} \right\rangle$ the smallest ideal containing $P$ and $N.$ Then
    \[
      \Stab(G) = \left\{ \alpha \in (\mathcal{O}/N \mathcal{O})^{\times} \mid \alpha \equiv c \mod \mathfrak{a}^{-1}(N) \text{ for } c \in \mathbb{Z},\, (c, N) = 1\right\}.
    \]
\end{prop}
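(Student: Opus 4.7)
The plan is to mirror the structure of the proof of \Cref{prop:stab-gamma-1}, replacing the constant $\pm 1$ by an arbitrary unit $c \in (\mathbb{Z}/N\mathbb{Z})^{\times}$. The key observation is that for the $\Gamma_0(N)$ case the equivalence relation automatically absorbs the $[-1]$-automorphism, since $-G = G$ for a subgroup $G$, so the relevant condition on $\alpha$ is simply that $\alpha$ preserves the subgroup $\langle P\rangle$, i.e.\ sends a generator to a generator.

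First I would take $\alpha \in \Stab(G)$ and translate this into the condition $\alpha P \equiv c P \pmod{N\mathcal{O}}$ for some $c \in \mathcal{O}/N\mathcal{O}$. Because $\alpha$ is a unit, $\alpha P$ still has order $N$, so $c$ must be a unit modulo $N$; and because $\alpha P$ must lie in $\langle P \rangle \subseteq \mathcal{O}/N\mathcal{O}$, the multiplier $c$ can be taken in $\mathbb{Z}$ with $(c,N)=1$. Thus $(\alpha - c) P \equiv 0 \pmod{N\mathcal{O}}$.

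From this point the ideal-theoretic argument is exactly the one used in \Cref{prop:stab-gamma-1}. Writing a generic element of $\mathfrak{a} = \langle P, N\mathcal{O}\rangle$ as $\beta = aP + bN$, the product $(\alpha - c)\beta$ lies in $N\mathcal{O}$ because each summand does; hence $(\alpha - c)\mathfrak{a} \subseteq N\mathcal{O}$, which is precisely the statement $\alpha - c \in \mathfrak{a}^{-1}N\mathcal{O}$. This gives the forward inclusion
\[
  \Stab(G) \subseteq \bigl\{\alpha \in (\mathcal{O}/N\mathcal{O})^{\times} \mid \alpha \equiv c \bmod \mathfrak{a}^{-1}N\mathcal{O},\ c\in\mathbb{Z},\ (c,N)=1\bigr\}.
\]

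For the reverse inclusion I would observe that if $\alpha \equiv c \pmod{\mathfrak{a}^{-1}N\mathcal{O}}$ with $(c,N)=1$, then $(\alpha - c)\mathfrak{a} \subseteq N\mathcal{O}$; in particular $(\alpha-c)P \in N\mathcal{O}$, so $\alpha P \equiv c P \pmod{N\mathcal{O}}$, and multiplication by the integer $c$ (coprime to $N$) permutes $\langle P\rangle$, giving $\alpha(G) = G$. The only point that could conceivably cause difficulty is verifying that the $c$ which arises in the forward direction can always be taken to be an ordinary integer rather than a more general element of $\mathcal{O}$; but this is forced by the requirement $\alpha P \in \langle P \rangle$ together with \Cref{lem:O/N isomorphism}, which identifies $\langle P \rangle$ with the image of $\mathbb{Z}$ in $\mathcal{O}/N\mathcal{O}$ under the chosen generator.
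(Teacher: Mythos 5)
Your proof matches the paper's in structure: both reduce $\alpha G = G$ to $\alpha P = cP$ with $c$ an integer coprime to $N$ (because $G$ is cyclic of order $N$), and then run the ideal-containment argument from \Cref{prop:stab-gamma-1} verbatim with $\pm 1$ replaced by $c$. One small correction: your closing appeal to \Cref{lem:O/N isomorphism} is unnecessary and slightly misstates it (that lemma identifies $E[N]$ with $\mathcal{O}/N\mathcal{O}$ as $\mathcal{O}/N\mathcal{O}$-modules; it does not identify $\langle P\rangle$ with the image of $\mathbb{Z}$) --- the simpler observation you already made, that $\langle P\rangle$ is a cyclic abelian group of order $N$ so its elements are exactly the integer multiples of $P$, is what forces $c\in\mathbb{Z}$.
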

\begin{proof}
  Let $\alpha G = G.$ Then $\alpha P = P'$ where $G = \left\langle P' \right\rangle.$ Since $G$ is cyclic of order $N$, we must have $P' = c P$ for some $c$ coprime to $N.$ The rest of the proof follows as \Cref{prop:stab-gamma-1} but by replacing the requirement of $\alpha \equiv \pm 1 \mod \mathfrak{a}^{-1}N \mathcal{O}$ with $\alpha \equiv c \mod \mathfrak{a}^{-1}N \mathcal{O}$ where $c$ is an integer coprime to $N.$
\end{proof}
\begin{rmk}
  The above propositions can be modified to work for $j$-invariants $0$ and $1728$ by considering congruences modulo a $6^{\text{th}}$ or $4^{\text{th}}$ root of unity respectively.
\end{rmk}
\section{Adding Level Structure to Isogeny Graphs}\label{sec:level-structures}
This section will be organised into three subsections where we will look at ordinary $\ell$-isogeny graphs with the three types of level structures. We are particularly interested in how the choice of $N$ affects the size and number of components when adding level structure.  In what follows, we will let $\mathcal{O}$ be an order in an imaginary quadratic field $K.$ Rather than look at the entire isogeny graph, we will focus our attention on the connected components (i.e. volcanoes), and in particular, their craters.  We will let $G_{\ell}$ be such a component whose crater $C_{\ell}$ consists of isomorphism classes of elliptic curves whose endomorphism rings are equal to $\mathcal{O}.$ Further, we will restrict ourselves to only working with elliptic curves whose $j$-invariants are not $0$ or $1728$ to avoid automorphism groups different from $\left\{ \pm 1 \right\}.$ As usual, we will let $N$ be a positive integer coprime to $p,$ $\ell,$ and the conductor of $\mathcal{O}.$

\begin{lem}\label{lem:volcano-growth}
  Let $G_{\ell}$ be an isogeny volcano with $n$ vertices.
  \begin{enumerate}[label=(\roman*)]
  \item The graph $G_{\ell,0}(N)$ has
    \[
      n N\prod_{p \mid N}\left( 1 + \frac{1}{p} \right)
    \]
    vertices.
  \item If $N > 2,$ the graph $G_{\ell,1}(N)$ has
    \[
      \frac{nN\phi(N)}{2}\prod_{p \mid N}\left( 1 + \frac{1}{p} \right)
    \]
    vertices and $3n$ if $N = 2$.
  \item If $N > 2,$ the graph $G_{\ell}(N)$ has
    \[
      \frac{nN^2\phi(N)^2}{2} \prod_{p \mid N}\left( 1 + \frac{1}{p} \right)
    \]
    vertices and $6n$ if $N = 2$.
  \end{enumerate}
  where $\phi$ is Euler's totient function.
\end{lem}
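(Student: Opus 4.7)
The plan is to fix one of the $n$ vertices of $G_\ell$ and count the number of equivalence classes of level structures carried by the corresponding elliptic curve $E$; since this count will only depend on $N$, the total vertex count of the enhanced graph is simply $n$ times this number. Because we have excluded $j$-invariants $0$ and $1728$, we have $\Auto(E)=\{\pm 1\}$, so the equivalence relation of \Cref{def:vertex-equivalence} collapses each pair $(E,\gamma)$ with its image $(E,-\gamma)$ and nothing more. Using that $N$ is coprime to $p$, the $N$-torsion satisfies $E[N]\cong(\mathbb{Z}/N\mathbb{Z})^2$, so the raw counts reduce to standard combinatorics of $(\mathbb{Z}/N\mathbb{Z})^2$.

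For the $\Gamma_0(N)$ case, I would count the cyclic subgroups of order $N$ inside $(\mathbb{Z}/N\mathbb{Z})^2$. A short Chinese Remainder plus $\mathbb{P}^1(\mathbb{F}_p)$-count at each prime $p\mid N$ yields $N\prod_{p\mid N}(1+1/p)$ such subgroups. The automorphism $[-1]$ sends $\langle P\rangle$ to $\langle -P\rangle=\langle P\rangle$, so it fixes every $\Gamma_0(N)$-structure; no quotient is needed and the stated formula follows.

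For $\Gamma_1(N)$, I would count points of exact order $N$ in $(\mathbb{Z}/N\mathbb{Z})^2$, obtaining Jordan's totient $J_2(N)=N^2\prod_{p\mid N}(1-1/p^2)=\phi(N)\cdot N\prod_{p\mid N}(1+1/p)$. For $\Gamma(N)$, I would count ordered bases of $(\mathbb{Z}/N\mathbb{Z})^2$, i.e.\ $|GL_2(\mathbb{Z}/N\mathbb{Z})|=N^2\phi(N)^2\prod_{p\mid N}(1+1/p)$ (verified at $N=p$ prime via $(p^2-1)(p^2-p)$ and extended multiplicatively). In both cases $[-1]$ acts by negation on the chosen point or on each basis vector. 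When $N>2$ no point of order $N$ is $2$-torsion, so this action is fixed-point-free and one divides by $2$, giving the two displayed formulas.

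The $N=2$ case must be treated separately because $[-1]$ acts trivially on $E[2]$. Then there are exactly $3$ points of order $2$ (giving $3n$ vertices for $\Gamma_1(2)$) and $|GL_2(\mathbb{F}_2)|=6$ ordered bases of $E[2]$ (giving $6n$ vertices for $\Gamma(2)$), with no quotient. The only subtle point in the argument, as opposed to a purely mechanical count, is justifying that $[-1]$ acts freely on points (resp.\ bases) of exact order $N$ when $N>2$: this is because $2P\equiv0\pmod N$ together with $P$ of order $N$ forces $N\mid 2$. Everything else reduces to standard multiplicative-function identities.
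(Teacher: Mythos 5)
Your proposal is correct and follows essentially the same route as the paper: fix one of the $n$ vertices, count level structures on the corresponding curve via $E[N]\simeq(\mathbb{Z}/N\mathbb{Z})^2$ and CRT, then quotient by $\Auto(E)=\{\pm1\}$, noting that this action is trivial on $\Gamma_0(N)$-structures, free on points and bases of exact order $N>2$, and trivial again when $N=2$. The only cosmetic difference is that you invoke standard closed forms (Jordan's totient $J_2(N)$, $\lvert GL_2(\mathbb{Z}/N\mathbb{Z})\rvert$, a $\mathbb{P}^1$-count) where the paper derives the same quantities by hand, starting from the $\Gamma_1$ count and then dividing by $\phi(N)$ for $\Gamma_0$ or multiplying by the number of second basis vectors for $\Gamma(N)$.
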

\begin{proof}
  For each case, we proceed by counting the number of level structures which can be added to each vertex in $G_{\ell}.$

  We begin by proving (ii). Since $E[N] \simeq \mathbb{Z}/N\mathbb{Z} \times \mathbb{Z}/N \mathbb{Z}$ and $N = \prod p_i^{e_i},$ then by the Chinese Remainder Theorem we may write
  \begin{equation}\label{eq:torsion-factors}
    E[N] \simeq \left( \mathbb{Z}/p_1^{e_1}\mathbb{Z} \right)^2 \times \left( \mathbb{Z}/p_2^{e_2}\mathbb{Z} \right)^2 \times \cdots \times \left( \mathbb{Z}/p_r^{e_r}\mathbb{Z} \right)^2.
  \end{equation}
   An element $P$ of $E[N]$ has maximal order $N$ if and only if it has maximal order in each of factors on the right-hand side of \eqref{eq:torsion-factors}, and so it is sufficient to find the number of elements of maximal order of each factor of $E[N].$ An element $(a,b)$ of $(\mathbb{Z}/p_i^{e_i})^2$ is of maximal order if either $(a,p_i^{e_i}) = 1$ or $(b, p_i^{e_i}) = 1.$ There are $\phi(p_i^{e_i})p_i^{e_i}$ pairs where $a$ is coprime to $p_i^{e_i}$ and $\phi(p_i^{e_i})(p_i^{e_i} - \phi(p_i^{e_i}))$ pairs where $b$ is coprime to $p_i^{e_i}$ and $a$ is not. Some algebraic manipulation shows there are a total of $p_i^{2e_i} - p_i^{2(e_i-1)}$ such elements of maximal order in $(\mathbb{Z}/p_i^{e_i}\mathbb{Z})^2$. We repeat this for each prime power factor of $N$ and by the Chinese Remainder Theorem we may take their product to get
  \begin{equation}\label{eq:num-points}
    N \phi(N) \prod_{p \mid N}\left(1 + \frac{1}{p}\right)
  \end{equation}
  total points of order $N.$ If $N = 2,$ then $P = -P$ for any point $P$ and so we may substitute $N = 2$ into \eqref{eq:num-points} to get a scaling factor of $3.$ Otherwise, if $N > 2,$ then we must divide the result of \eqref{eq:num-points} by $2$ due to identifying $P$ with $-P.$

  To prove (i), we use the fact that any cyclic subgroup $\left\langle P \right\rangle \subseteq E[N]$ of order $N$ has $\phi(N)$ elements with order $N.$ We may therefore partition the elements $P$ of order $N$ into equivalence classes where $P \sim P'$ if there exists an integer $k$ such that $P' = kP.$ From (ii), we have that there are
  \[
    N\phi(N)\prod_{p \mid N}\left( 1 + \frac{1}{p} \right)
  \]
  elements of order $N$ in $E[N].$ Dividing by $\phi(N)$ the size of each partition, we see there are
  \[
     N \prod_{p \mid N}\left( 1 + \frac{1}{p} \right)
  \]
  such equivalence classes, and therefore subgroups, of order $N.$

  To prove (iii), we first work with a factor $(\mathbb{Z}/p_i^{e_i}\mathbb{Z})^2$ of \eqref{eq:torsion-factors}. The result of (ii) gives us the number of ways to choose a first basis point $P$. The restrictions on choosing a second basis point $Q$ is that it must be of order $p_i^{e_i}$ and it cannot be congruent to a scalar multiple of $P$ modulo $p_i.$ The congruence requirement arises from the fact that $\mathbb{Z}/p_i^{e_i} \mathbb{Z}$ is local at $p_i.$ From (ii), we know there are $\phi(p_i^{e_i})$ elements of order $p_i^{e_i}$ in $\left\langle P \right\rangle$ and that each of those elements is congruent to $p_i^{e_i-1}$ distinct elements of order $p_i^{e_i}.$ This gives a total of  $\phi(p_i^{e_i})(p_i^{e_i} + p_i^{e_i-1}) - \phi(p_i^{e_i})p_i^{e_i-1} = p_i^{e_i}\phi(p_i^{e_i})$ choices for $Q.$ Taking the product over all $p_i$ by the Chinese Remainder Theorem gives
  \begin{equation}\label{eq:num-bases}
    N^2\phi(N)^2\prod_{p \mid N}\left( 1 + \frac{1}{p} \right)
  \end{equation}
  choices of bases. If $N = 2,$ then $P = -P$ and $Q = -Q$ for any choice of $P, Q,$ and so substituting $N = 2$ into \eqref{eq:num-bases} gives a scaling factor of $6.$ If $N > 2,$ we divide \eqref{eq:num-bases} by $2$ due to identifying the pair $(P, Q)$ with $(-P, -Q).$
\end{proof}

The graphs of \Cref{defn:level-struct-volcanoes} consist of components which are each a covering graph of $G_{\ell}.$ The crater $C_{\ell}$ is the defining characteristic of $G_{\ell},$ so the following work will only discuss craters unless otherwise stated.

\begin{defn}
  Let $G_{\ell}$ be an isogeny volcano and $G_{\ell,0}(N),$ $G_{\ell,1}(N),$ and $G_{\ell}(N)$ the graphs obtained as described in \Cref{defn:level-struct-volcanoes}. Then
  \begin{enumerate}[label=(\roman*)]
  \item $C_{\ell,0}(N)$ denotes the subgraph of $G_{\ell,0}(N)$ whose components are obtained by adding a $\Gamma_0(N)$-level structure to $C_{\ell};$
  \item $C_{\ell,1}(N)$ denotes the subgraph of $G_{\ell,1}(N)$ whose components are obtained by adding a $\Gamma_1(N)$-level structure to $C_{\ell};$ and,
  \item $C_{\ell}(N)$ denotes the subgraph of $G_{\ell}(N)$ whose are obtained by adding a $\Gamma(N)$-level structure to $C_{\ell}.$
  \end{enumerate}
\end{defn}
For lack of a better term, we will refer to the graphs $C_{\ell,0}(N),$ $C_{\ell,1}(N)$ and $C_{\ell}(N)$ as the \emph{craters} of $G_{\ell,0}(N),$ $G_{\ell,1}(N)$ and $G_{\ell}(N)$ respectively. The reader should note that since the graphs $G_{\ell,1}(N)$ and $G_{\ell}(N)$ are directed, their craters do not look like craters of standard isogeny volcanoes.

Let $\mathfrak{l}$ be a prime $\mathcal{O}$-ideal above $\ell.$ If $\ell$ is an inert prime, then $\mathfrak{l}$ does not induce a degree $\ell$ isogeny and $C_{\ell}$ is totally disconnected. Adding a level structure does not change these craters. As such, for the remainder of this paper we will only concern ourselves with primes $\ell$ which either split or ramify. From~\cite{Kohel}, we know the size of $C_{\ell}$ is equal to the order $n$ of $[\mathfrak{l}]$ in the class group $Cl(\mathcal{O}).$ In particular, we have $\mathfrak{l}^n = \lambda\mathcal{O}$ and $\overline{\mathfrak{l}}^n = \overline{\lambda}\mathcal{O}$ for some $\lambda, \overline{\lambda} \in \mathcal{O}^{\times}$ where $\mathfrak{l} = \overline{\mathfrak{l}}$ if $\ell$ is ramified.
\subsection{Crater graphs $C_{\ell,0}(N)$} We will begin by looking at the components with a $\Gamma_0(N)$-level structure. Let $E$ be an elliptic curve on a crater $C_{\ell}.$
\begin{defn}\label{defn:principal-vertex-0}

  Let $G$ be a $\Gamma_0(N)$-level structure of $E.$ Then $v = (E,G)$ is a vertex on some component $\mathcal{C}_{\ell,0}(N) \subseteq C_{\ell,0}(N).$ We say a vertex $v' \in \mathcal{C}_{\ell,0}(N)$ is a \emph{$\Gamma_0$-principal vertex of $v$} if $v'$ is of the form $(E, G')$ for some $G' \subseteq E[N]$ of order $N.$
\end{defn}
A crater $C_{\ell}$ can be described by the behaviour of $\ell$ in its class group. If $\ell$ splits as a product of non-principal ideals, the crater is a cycle of length equal to the order of a prime $\mathfrak{l}$ above $\ell$ in its class group. If $\ell$ splits as a product of principal ideals, the crater consists of a single vertex with two loops. If $\ell$ ramifies as a non-principal ideal, then the crater consists of an edge, and if $\ell$ ramifies as a principal ideal, then the crater is a single vertex with a loop. The following lemma and theorem are independent of the behaviour of $\ell$ in its class group.
\begin{lem}\label{lem:principal-vertex-0}
Let $v = (E,G)$ be a vertex on $\mathcal{C}_{\ell,0}(N) \subseteq C_{\ell,0}(N).$ If $v' = (E, G')$ is a $\Gamma_0$-principal vertex of $v,$ then there exists an endomorphism $\alpha \in \Endo(E)$ such that $\alpha(G) = G'$. Furthermore, there exists some positive integer $d$ such that $\mathfrak{l}^d = \alpha\mathcal{O}.$
\end{lem}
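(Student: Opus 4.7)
The plan is to lift a path from $v$ to $v'$ in the connected component of $C_{\ell,0}(N)$ containing $v$ to a composition of $\ell$-isogenies, producing an endomorphism of $E$ whose ideal I can then massage into a pure power of $\mathfrak{l}$. First I would fix any edge-path $v=v_0,v_1,\ldots,v_k=v'$ in that component. Each edge $v_i\to v_{i+1}$ arises from an $\ell$-isogeny of the underlying curves that sends one level structure to the next, and on the crater this isogeny is the action of one of the two primes $\mathfrak{l}$ or $\overline{\mathfrak{l}}$ above $\ell$. Since $v$ and $v'$ both lie over $E$, the underlying walk in $C_{\ell}$ is closed at $E$, and composing the constituent isogenies yields an endomorphism $\alpha_0\in\Endo(E)=\mathcal{O}$ with $\alpha_0(G)=G'$ by the level-structure compatibility of the edges.

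Next I would compute $\alpha_0\mathcal{O}$. If the walk uses $a$ steps of type $\mathfrak{l}$ and $b$ of type $\overline{\mathfrak{l}}$, then $\alpha_0\mathcal{O}=\mathfrak{l}^a\overline{\mathfrak{l}}^b$, which (WLOG $a\geq b$) simplifies via $\mathfrak{l}\overline{\mathfrak{l}}=\ell\mathcal{O}$ to $\ell^b\,\mathfrak{l}^{a-b}$. Principality of $\alpha_0\mathcal{O}$ forces $n\mid(a-b)$ for $n=\mathrm{ord}([\mathfrak{l}])$, and with $\mathfrak{l}^n=\lambda\mathcal{O}$ this gives $\alpha_0=\pm\,\ell^b\lambda^{(a-b)/n}$. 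The key observation is that $\ell$ acts as a bijection on any cyclic subgroup of $E[N]$ of order $N$ (because $\gcd(\ell,N)=1$) and $-1$ fixes such a subgroup, so as subgroups one has $G'=\alpha_0(G)=\lambda^{(a-b)/n}(G)$, placing $G'$ in the $\langle\lambda\rangle$-orbit of $G$.

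To finish, I would choose a positive integer $m$ congruent to $(a-b)/n$ modulo the order of $\lambda$ acting on the set of order-$N$ cyclic subgroups of $E[N]$; such an $m$ always exists (just shift by a positive multiple of this order if needed). Then $\alpha:=\lambda^m$ satisfies $\alpha(G)=G'$ and $\alpha\mathcal{O}=\mathfrak{l}^{mn}$, so $d:=mn$ is the positive integer the lemma requires. The case $b>a$ is handled symmetrically, and the ramified or principal-$\mathfrak{l}$ degenerate cases ($\mathfrak{l}=\overline{\mathfrak{l}}$ or $n=1$) go through the same way. The main obstacle I anticipate is cleanly handling ``excursion'' subwalks that double back on themselves: they contribute factors of $\ell$ to $\alpha_0\mathcal{O}$, and one needs the coprimality $\gcd(\ell,N)=1$ to argue these act trivially on $G$ and can be absorbed into the unit $\pm 1$.
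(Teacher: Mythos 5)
Your proposal is correct and takes essentially the same route as the paper: lift a path from $v$ to $v'$ to a composition of $\ell$-isogenies, observe that the closed walk on $E$ forces the resulting ideal to be principal, and then use $\gcd(\ell,N)=1$ (so $\ell$ fixes $G$ as a subgroup) together with $\Auto(E)=\{\pm1\}$ to strip out the $\overline{\mathfrak{l}}$-contributions and land on a pure power $\mathfrak{l}^{d}$ with $n\mid d$. The paper's proof is terser — it simply asserts the path is induced by $\mathfrak{l}^{d}$, implicitly reading the undirected crater cycle in the $\mathfrak{l}$-direction — whereas you allow a mixed $\mathfrak{l}/\overline{\mathfrak{l}}$ walk and explicitly reduce it, and you also handle the sign/positivity of $d$ by shifting modulo the orbit length; both are correct, and your version just makes explicit the bookkeeping the paper leaves to the reader.
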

\begin{proof}
Any edge of $\mathcal{C}_{\ell,0}(N)$ represents an isogeny induced by $\mathfrak{l}$ which has finite order $n$ in $Cl(\mathcal{O}).$ Since $v, v'$ are on the same component, the path connecting them is the isogeny $\varphi_{\mathfrak{l}^d}$ induced by $\mathfrak{l}^d$ for some $d.$ This isogeny must be induced by a principal ideal $\alpha\mathcal{O}$ since $\varphi_{\mathfrak{l}^d}: E \to E,$ and so $\mathfrak{l}^d = \alpha\mathcal{O}$ where $n \mid d.$
\end{proof}
It is now clear why we use the term \emph{principal vertex} in \Cref{defn:principal-vertex-0}: paths from a vertex $v$ to a principal vertex arise from principal ideals. \Cref{lem:principal-vertex-0} tells us the size of a component $\mathcal{C}_{\ell,0}(N) \subseteq C_{\ell,0}(N)$ is equal to the order $n$ of $[\mathfrak{l}]$ in the class group scaled by the number of principal vertices for some fixed vertex $v.$

Given $\mathfrak{a}$ an $\mathcal{O}$-ideal, we will let $\phi_{\mathcal{O}}(\mathfrak{a})$ denote the size of the residue ring $(\mathcal{O}/\mathfrak{a})^{\times}.$ This can be thought of as a generalised version of Euler's totient function. A standard result~\cite[Thm.~1.19]{narkiewicz_ant} gives
\begin{equation}\label{eq:general-euler-phi}
  \phi_{\mathcal{O}}(\mathfrak{a}) = \mathcal{N}(\mathfrak{a}) \prod_{\mathfrak{p} \mid \mathfrak{a}}\left( 1 - \frac{1}{\mathcal{N}(\mathfrak{p})} \right)
\end{equation}
where $\mathcal{N}(\mathfrak{a})$ denotes the norm of $\mathfrak{a}.$ We will let $\phi_{\mathcal{O}}$ denote the totient function on $\mathcal{O}$-ideals and $\phi$ denote the usual Euler totient function. We remark that $\phi(N) \neq \phi_{\mathcal{O}}(N \mathcal{O})$ as the right-hand side depends on how the (rational) prime factors of $N$ behave in $\mathcal{O}$ while the left-hand side only depends on the (rational) prime factorisation of $N.$

\begin{thm}\label{thm:crater-structure-0}
Let $v = (E, G)$ be a vertex on a component $\mathcal{C}_{\ell,0}(N) \subseteq C_{\ell,0}(N)$ where $G = \left\langle P \right\rangle$ and let $\mathfrak{a} = (P, N)$ be the $\mathcal{O}$-ideal generated by $P$ and $N.$ Recall that $\lambda$ was defined by $\mathfrak{l}^n = \lambda \mathcal{O}.$ Then $\#\mathcal{C}_{\ell,0}(N) = nm_{\mathfrak{a}}$ where $m_{\mathfrak{a}}$ is the order of the natural projection of $\lambda$ in $(\mathcal{O}/NO)^{\times}/\Stab(G).$ Further, $C_{\ell,0}(N)$ contains $\frac{\phi_{\mathcal{O}}(\mathfrak{a}^{-1}N \mathcal{O})}{\phi(N)m_{\mathfrak{a}}}$ components isomorphic to $\mathcal{C}_{\ell,0}(N).$
\end{thm}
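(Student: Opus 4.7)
The plan is to split the argument in two: first compute the number of vertices in a single component $\mathcal{C}_{\ell,0}(N)$, then count the components of a given $\mathfrak{a}$-type in $C_{\ell,0}(N)$. Throughout I work above a fixed base vertex $E$ on the crater $C_{\ell}$.

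For the size, the projection $(E',G')\mapsto E'$ realises $\mathcal{C}_{\ell,0}(N)$ as a covering of $C_{\ell}$, so its cardinality equals $n$ times the number of $\Gamma_0$-principal vertices of $v$ above $E$. By \Cref{lem:principal-vertex-0}, each such principal vertex has the form $(E,\alpha(G))$ with $\alpha\mathcal{O}=\mathfrak{l}^d$, and since $\mathfrak{l}^n=\lambda\mathcal{O}$, the principal powers of $\mathfrak{l}$ are exactly the $\mathfrak{l}^{kn}$ with generators $\pm\lambda^k$ (using the standing assumption $j\ne0,1728$, whence $\mathcal{O}^{\times}=\{\pm1\}$). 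Since $-1\in\Stab(G)$ by \Cref{prop:stab-gamma-0} with $c=-1$, the principal vertices above $E$ correspond bijectively to the orbit of $G$ under the image of $\lambda$ in $(\mathcal{O}/N\mathcal{O})^{\times}/\Stab(G)$. That orbit has size $m_{\mathfrak{a}}$ by definition, giving $\#\mathcal{C}_{\ell,0}(N)=n\,m_{\mathfrak{a}}$. Note that $\bar{\mathfrak{l}}$ produces no new principal vertices, as $\bar{\lambda}\equiv\ell^{n}\lambda^{-1}\equiv\lambda^{-1}$ in the quotient (integers coprime to $N$ lie in $\Stab(G)$).

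For the component count, I first observe that $\mathfrak{a}$ is constant along any component: since $\gcd(\ell,N)=1$, the isogeny $\varphi_{\mathfrak{l}}$ induces an $\mathcal{O}$-equivariant isomorphism on $N$-torsion, preserving $(P,N\mathcal{O})$. So ``components isomorphic to $\mathcal{C}_{\ell,0}(N)$" is the same as ``components of the same type $\mathfrak{a}$". Above a fixed $E$ I count cyclic subgroups $\langle P'\rangle\subseteq E[N]$ of order $N$ with $(P',N\mathcal{O})=\mathfrak{a}$. Via \Cref{lem:O/N isomorphism}, this amounts to counting cyclic $\mathcal{O}$-module generators of the submodule $\mathfrak{a}/N\mathcal{O}$ of $\mathcal{O}/N\mathcal{O}$. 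Invertibility of $\mathfrak{a}$ (coprime to the conductor) yields an $\mathcal{O}$-module isomorphism $\mathfrak{a}/N\mathcal{O}\cong\mathcal{O}/\mathfrak{a}^{-1}N\mathcal{O}$ (obtained by tensoring the exact sequence $0\to N\mathcal{O}\to\mathfrak{a}\to\mathfrak{a}/N\mathcal{O}\to0$ with the invertible module $\mathfrak{a}^{-1}$). Its cyclic generators are the units, so there are $\phi_{\mathcal{O}}(\mathfrak{a}^{-1}N\mathcal{O})$ admissible elements $P'$; each such generator automatically has additive order equal to the exponent of $\mathfrak{a}/N\mathcal{O}$, and the hypothesis that $P$ itself has order $N$ forces this exponent to equal $N$.

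Passing from generators to subgroups divides by $\phi(N)$, since every cyclic subgroup of order $N$ has $\phi(N)$ generators and scaling by $k\in(\mathbb{Z}/N)^{\times}$ preserves $\mathfrak{a}$, so there are $\phi_{\mathcal{O}}(\mathfrak{a}^{-1}N\mathcal{O})/\phi(N)$ subgroups of type $\mathfrak{a}$ above $E$. By the first part each component of type $\mathfrak{a}$ contributes exactly $m_{\mathfrak{a}}$ of these (its principal vertices above $E$), producing the asserted count $\phi_{\mathcal{O}}(\mathfrak{a}^{-1}N\mathcal{O})/(\phi(N)\,m_{\mathfrak{a}})$. I expect the main technical obstacle to be justifying the $\mathcal{O}$-module isomorphism $\mathfrak{a}/N\mathcal{O}\cong\mathcal{O}/\mathfrak{a}^{-1}N\mathcal{O}$ in the non-maximal order setting and checking that its generators translate back to genuine $N$-torsion points of exact order $N$; the rest amounts to orbit-counting together with the stabiliser description of \Cref{prop:stab-gamma-0}.
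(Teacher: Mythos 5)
Your proof follows essentially the same route as the paper's: compute the component size as $n$ times the number of $\Gamma_0$-principal vertices above a fixed $E$, identified via \Cref{lem:principal-vertex-0} and \Cref{prop:stab-gamma-0} with the orbit of $G$ under the image of $\lambda$ in $(\mathcal{O}/N\mathcal{O})^{\times}/\Stab(G)$, and then count cyclic subgroups of fixed $\mathfrak{a}$-type by passing to $(\mathcal{O}/\mathfrak{a}^{-1}N\mathcal{O})^{\times}$ and dividing by $\phi(N)$. Your write-up fills in several details the paper leaves implicit — the explicit $\mathcal{O}$-module isomorphism $\mathfrak{a}/N\mathcal{O}\cong\mathcal{O}/\mathfrak{a}^{-1}N\mathcal{O}$, the constancy of $\mathfrak{a}$ along a component, and the check that $\overline{\mathfrak{l}}$ produces no new principal vertices — but the underlying argument is the same.
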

\begin{proof}
  We first note that any component $\mathcal{C}_{\ell,0}(N)$ of $C_{\ell,0}(N)$ will have a crater whose size is divisible by the order $n$ of $\mathfrak{l}$ in $Cl(\mathcal{O}).$ Using this fact combined with \Cref{lem:principal-vertex-0}, it suffices to know how many principal vertices exist for some fixed vertex $v.$ Since paths between principal vertices correspond to principal ideals of the form $\lambda^k \mathcal{O}$ for some $k \in \mathbb{Z},$ we need to find the smallest $k$ such that $\lambda^kG = G$ to determine the component sizes. \Cref{prop:stab-gamma-0} tells us the stabilisers of $G,$ and so the number of principal vertices of $v$ is simply the order of the natural projection of $\lambda$ in $(\mathcal{O}/N\mathcal{O})^{\times}/\Stab(G).$

  To prove the second claim, observe that $\mathfrak{a}$ is the smallest ideal containing $P$ and $N,$ so by the isomorphism in \Cref{lem:O/N isomorphism}, the image of $P$ under the natural projection is in $(\mathcal{O}/\mathfrak{a}^{-1}N \mathcal{O})^{\times}$. By \eqref{eq:general-euler-phi}, the ring $(\mathcal{O}/\mathfrak{a}^{-1}N \mathcal{O})$ contains $\phi_{\mathcal{O}}(\mathfrak{a}^{-1}N \mathcal{O})$ such elements, one of which is the image of $P.$ For a cyclic group $G$ of order $N,$ there are $\phi(N)$ elements of order $N,$ and so we may partition these $\phi_{\mathcal{O}}(\mathfrak{a}^{-1}N \mathcal{O})$ elements whereby two points are equivalent if and only if they generate the same cyclic group. The result follows since any component contains $m_{\mathfrak{a}}$ such cyclic subgroups belonging to $E[N].$
\end{proof}
A consequence of \Cref{lem:principal-vertex-0} and \Cref{thm:crater-structure-0} is that if $\ell$ decomposes as a product of principal ideals, then the crater may consist of a cycle rather than loops when adding a $\Gamma_{0}(N)$ level structure.

By \Cref{thm:crater-structure-0}, we can determine the components (and thus all) of $G_{\ell,0}(N)$ by considering all ideals (including all of $\mathcal{O}$) $\mathfrak{a}$ dividing $N \mathcal{O}$ along with the condition that $N \mid \mathcal{N}(\mathfrak{a}^{-1}N),$ and looking at the order $m_{\mathfrak{a}}$ of the projection of $\lambda$ in $(\mathcal{O}/\mathfrak{a}^{-1}N \mathcal{O})^{\times}$ divided by the index $[(\mathcal{O}/\mathfrak{a}^{-1}N \mathcal{O})^{\times} : (\mathbb{Z}/N \mathbb{Z})^{\times}].$ The divisibility condition on $\mathcal{N}(\mathfrak{a}^{-1}N)$ arises from the fact that $P$ must generate a cyclic group of order $N.$ Equivalently, we can determine $G_{\ell,0}(N)$ by considering each $\Gamma_0(N)$-level structure and applying \Cref{thm:crater-structure-0} directly.

If we only consider the case where $N$ is a rational prime, the situation is simpler and we can describe component sizes of $C_{\ell,0}(N).$
\begin{cor}\label{cor:crater-structure-0}
  Let $N$ be a prime different from $\ell$ and $p$ and let $m$ denote the order of the natural projection of $\lambda$ in $(\mathcal{O}/N\mathcal{O})^{\times}/(\mathbb{Z}/N \mathbb{Z})^{\times}.$ Then $C_{\ell,0}(N)$ consists of one of the following:
  \begin{enumerate}[label=(\roman*)]
  \item If $N$ is inert, then $C_{\ell,0}(N)$ contains $\frac{N+1}{m}$ components each of size $nm.$
  \item If $N\mathcal{O} = \mathfrak{N}_1 \mathfrak{N}_2,$ then $C_{\ell,0}(N)$ contains $\frac{N-1}{m}$ components each of size $nm$ and two components of size $n.$
  \item If $N\mathcal{O} = \mathfrak{N}^2,$ then $C_{\ell,0}(N)$ contains $\frac{N}{m}$ components each of size $nm$ and one component of size $n.$
  \end{enumerate}
\end{cor}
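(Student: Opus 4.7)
The plan is to read the corollary as a direct bookkeeping exercise on top of Theorem \ref{thm:crater-structure-0}. Since $N$ is prime, the possible values of the ideal $\mathfrak{a} = (P, N)$ attached to a vertex $v = (E, \langle P \rangle)$ are very restricted: $\mathfrak{a}$ must contain $N\mathcal{O}$, hence divides $N\mathcal{O}$, and the value $\mathfrak{a} = N\mathcal{O}$ is excluded because it would force $P \equiv 0 \bmod N\mathcal{O}$, contradicting the requirement that $P$ have order $N$. Thus the proof splits along the factorisation of $N$ in $\mathcal{O}$, and within each case I enumerate the admissible $\mathfrak{a}$, feed them into Theorem \ref{thm:crater-structure-0}, and collect the resulting components.

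First I would list the admissible $\mathfrak{a}$ in each case: when $N$ is inert, only $\mathfrak{a} = \mathcal{O}$; when $N\mathcal{O} = \mathfrak{N}_1 \mathfrak{N}_2$, the three possibilities $\mathfrak{a} \in \{\mathcal{O}, \mathfrak{N}_1, \mathfrak{N}_2\}$; when $N\mathcal{O} = \mathfrak{N}^2$, the two possibilities $\mathfrak{a} \in \{\mathcal{O}, \mathfrak{N}\}$. For each such $\mathfrak{a}$ I would compute $\phi_{\mathcal{O}}(\mathfrak{a}^{-1}N\mathcal{O})$ directly from \eqref{eq:general-euler-phi}: in the inert case one gets $N^2 - 1$; in the split case $(N-1)^2$ when $\mathfrak{a} = \mathcal{O}$ and $N - 1$ when $\mathfrak{a}$ is one of the $\mathfrak{N}_i$; in the ramified case $N(N-1)$ when $\mathfrak{a} = \mathcal{O}$ and $N - 1$ when $\mathfrak{a} = \mathfrak{N}$.

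The key step — and what I expect to be the only non-routine point — is identifying $m_{\mathfrak{a}}$ for each $\mathfrak{a}$. When $\mathfrak{a} = \mathcal{O}$, Proposition \ref{prop:stab-gamma-0} identifies $\Stab(G)$ with the image of $(\mathbb{Z}/N\mathbb{Z})^\times$ inside $(\mathcal{O}/N\mathcal{O})^\times$, so $m_{\mathfrak{a}}$ is exactly the integer $m$ named in the corollary. When $\mathfrak{a}$ is a proper prime divisor of $N\mathcal{O}$ (either $\mathfrak{N}_i$ or $\mathfrak{N}$), the quotient $\mathcal{O}/\mathfrak{a}^{-1}N\mathcal{O}$ is the residue field $\mathbb{F}_N$, so every element of $(\mathcal{O}/\mathfrak{a}^{-1}N\mathcal{O})^\times$ is already represented by a rational integer coprime to $N$; consequently $\Stab(G) = (\mathcal{O}/N\mathcal{O})^\times$, forcing $m_{\mathfrak{a}} = 1$.

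Finally I would substitute into Theorem \ref{thm:crater-structure-0}, which says a component for a given $\mathfrak{a}$ has size $n m_{\mathfrak{a}}$ and that there are $\phi_{\mathcal{O}}(\mathfrak{a}^{-1}N\mathcal{O}) / (\phi(N) m_{\mathfrak{a}})$ such components. In the inert case this yields $(N+1)/m$ components of size $nm$; in the split case the $\mathfrak{a} = \mathcal{O}$ contribution is $(N-1)/m$ components of size $nm$, while each of $\mathfrak{a} = \mathfrak{N}_1, \mathfrak{N}_2$ contributes a single component of size $n$; in the ramified case $\mathfrak{a} = \mathcal{O}$ gives $N/m$ components of size $nm$ and $\mathfrak{a} = \mathfrak{N}$ gives one component of size $n$. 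Summing over $\mathfrak{a}$ in each case gives the three listed conclusions.
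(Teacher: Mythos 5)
Your proposal is correct and follows essentially the same route as the paper: enumerate the admissible ideals $\mathfrak{a}=(P,N)$ dividing $N\mathcal{O}$ in each splitting case, compute $\phi_{\mathcal{O}}(\mathfrak{a}^{-1}N\mathcal{O})$ and $m_{\mathfrak{a}}$, and substitute into Theorem~\ref{thm:crater-structure-0}. Your observation that $\mathcal{O}/\mathfrak{a}^{-1}N\mathcal{O}\simeq\mathbb{F}_N$ for a proper prime divisor $\mathfrak{a}$ (so every residue is rational and $\Stab(G)$ is everything, hence $m_{\mathfrak{a}}=1$) is the same justification the paper gives, phrased slightly more explicitly, and the arithmetic matches in all three cases.
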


\begin{proof}
  These are all special cases of \Cref{thm:crater-structure-0}.

  To prove (i), observe that $\mathcal{O}/N \mathcal{O}$ is isomorphic to a finite field of order $N^2,$ and so for any $P$ of order $N,$ we have $\mathfrak{a} = (P, N) = \mathcal{O}.$ Then $\#(\mathcal{O}/N)^{\times} = N^2-1$ and $\Stab(G) = (\mathbb{Z}/N \mathbb{Z})^{\times}$ for any $G.$ By \Cref{thm:crater-structure-0}, $\mathcal{C}_{\ell,0}(N),$ contains $\frac{N^2-1}{m(N-1)}$ components each of size $nm.$ To prove (ii), we first observe that $\lvert (\mathcal{O}/N \mathcal{O})^{\times} \rvert = (N-1)^2$ and that $\mathfrak{a} \in \left\{ \mathfrak{N}_1, \mathfrak{N}_2, \mathcal{O} \right\}.$ If $\mathfrak{a} = \mathfrak{N}_1$ or $\mathfrak{N}_2,$ then by \Cref{thm:crater-structure-0} we can determine the corresponding components by computing the order of the projection of $\lambda$ in $(\mathcal{O}/\mathfrak{a}^{-1}N \mathcal{O})^{\times}/(\mathbb{Z}/N \mathbb{Z})^{\times},$ but this is the trivial group, and so we obtain two components each of size $n.$ If $\mathfrak{a} = \mathcal{O},$ then $G$ has a generator corresponding to an element of $(\mathcal{O}/N \mathcal{O})^{\times},$ and so there are $\frac{(N-1)^2}{m(N-1)}$ components each of size $nm.$ To prove (iii), we proceed as in the split case but set $\mathfrak{N}_{1} = \mathfrak{N}_{2} = \mathfrak{N}$ and observe that $\lvert (\mathcal{O} / N\mathcal{O})^{\times} \rvert = N^{2} - N.$ If $\mathfrak{a} = \mathfrak{N},$ then $(\mathcal{O}/\mathfrak{a}^{-1}N\mathcal{O})^{\times} = (\mathcal{O} / \mathfrak{N})^{\times} \simeq (\mathbb{Z} / N\mathbb{Z})^{\times}$ and so we obtain one component of size $n.$ If $\mathfrak{a} = \mathcal{O},$ then we obtain $\frac{N^{2}-N}{m(N-1)} = \frac{N}{m}$ components of size $nm.$
\end{proof}

\subsection{Crater graphs $C_{\ell,1}(N)$}
\begin{defn}\label{defn:principal-vertex-1}
  Let $P$ be a $\Gamma_1(N)$-level structure of $E.$ Then $v = (E, P)$ is a vertex on some component $\mathcal{C}_{\ell,1}(N) \subseteq C_{\ell,1}(N).$ We say a vertex $v' \in \mathcal{C}_{\ell,1}(N)$ is a \emph{$\Gamma_1$-principal vertex of $v$} if it is of the form $(E, P')$ for some $P' \in E[N]$ of order $N.$
\end{defn}
\begin{lem}\label{lem:principal-vertex-1}
Let $v = (E,P)$ be a vertex on $\mathcal{C}_{\ell,1}(N) \subseteq C_{\ell,1}(N).$ If $v' = (E, P')$ is a $\Gamma_1$-principal vertex of $v,$ then there exists an endomorphism $\alpha \in \Endo(E)$ such that $\alpha(P) = \pm P'$. Furthermore, there exists some ideal $\mathfrak{b}$ in the group $\left\langle \mathfrak{l}, \overline{\mathfrak{l}} \right\rangle$ such that $\mathfrak{b} = \alpha\mathcal{O}.$
\end{lem}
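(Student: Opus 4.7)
The plan is to mirror the structure of the proof of \Cref{lem:principal-vertex-0}, adapting it for two new features: the graph $G_{\ell,1}(N)$ is directed rather than undirected, and a $\Gamma_1$-level structure is a point (determined only up to sign under the $[-1]$ automorphism) rather than a subgroup. First I would observe that since $v=(E,P)$ and $v'=(E,P')$ lie in the same component $\mathcal{C}_{\ell,1}(N)$, there is a walk between them in the underlying graph, and this walk projects to a closed walk at $E$ in the crater $C_\ell$. Each step of this walk corresponds to an $\ell$-isogeny, which is induced by either the prime ideal $\mathfrak{l}$ or its conjugate $\overline{\mathfrak{l}}$ above $\ell$.

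The main computational step is to compose these individual isogenies: because the walk begins and ends over the same curve $E$, the composition is an endomorphism $\alpha \in \Endo(E) = \mathcal{O}$. At the level of $\mathcal{O}$-ideals, this composition yields a product $\mathfrak{b} = \mathfrak{l}^a \overline{\mathfrak{l}}^b$ for some integers $a,b$, where a step traversed in the direction of its underlying directed edge contributes $\mathfrak{l}$ or $\overline{\mathfrak{l}}$, while a step traversed against its orientation contributes the inverse in the group of fractional $\mathcal{O}$-ideals. In either case $\mathfrak{b} \in \langle \mathfrak{l}, \overline{\mathfrak{l}}\rangle$, and since $\alpha$ is an integral endomorphism, the fractional ideal $\mathfrak{b}$ is in fact the principal integral ideal $\alpha\mathcal{O}$.

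For the action on the level structure, tracking the image of $P$ along the walk shows that $\alpha(P)$ must represent the same vertex of $\mathcal{C}_{\ell,1}(N)$ as $P'$. Since we restricted to $j(E) \neq 0, 1728$, we have $\Auto(E) = \{\pm 1\}$, so the equivalence relation of \Cref{def:vertex-equivalence} identifies $(E, P')$ only with $(E, -P')$, forcing $\alpha(P) = \pm P'$. The main obstacle will be making the ``backward'' traversal of directed edges rigorous: one must either work consistently at the level of fractional ideal arithmetic (using $\mathfrak{l}\overline{\mathfrak{l}} = \ell\mathcal{O}$ together with $\gcd(\ell, N) = 1$ to invert an edge action unambiguously on $N$-torsion), or use the dual isogeny paired with the scalar $[\ell]^{-1}$ on the $N$-torsion. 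This is precisely the technical reason the ambient group here is $\langle \mathfrak{l}, \overline{\mathfrak{l}}\rangle$ rather than the cyclic group $\langle \mathfrak{l}\rangle$ that appeared in the $\Gamma_0$ setting.
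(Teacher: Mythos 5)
Your proposal matches the paper's argument in all essentials: every edge of $\mathcal{C}_{\ell,1}(N)$ comes from an isogeny induced by $\mathfrak{l}$ or $\overline{\mathfrak{l}}$, composing along a path from $v$ to $v'$ yields an isogeny which must be an endomorphism $\alpha$ of $E$ since both endpoints have underlying curve $E$, the corresponding ideal $\mathfrak{b} = \alpha\mathcal{O}$ lies in $\langle\mathfrak{l},\overline{\mathfrak{l}}\rangle$, and the sign ambiguity $\alpha(P)=\pm P'$ comes from $\Auto(E)=\{\pm1\}$ together with \Cref{def:vertex-equivalence}. The one place you do extra work is the ``backward traversal'' bookkeeping with fractional-ideal inverses: the paper implicitly treats only \emph{directed} paths from $v$ to $v'$, which exist because the crater carries edges of both types ($\mathfrak{l}$- and $\overline{\mathfrak{l}}$-induced), and since $\mathfrak{l}\overline{\mathfrak{l}}=\ell\mathcal{O}$ acts on $E[N]$ as multiplication by $\ell$, which has finite order in $(\mathbb{Z}/N\mathbb{Z})^{\times}$, each component is strongly connected. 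That makes $\mathfrak{b}$ a nonnegative monomial $\mathfrak{l}^a\overline{\mathfrak{l}}^b$ with no inverses needed. Your fractional-ideal formulation is also correct and leads to the same conclusion; it simply makes explicit a reduction the paper leaves unstated.
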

\begin{proof}
  Any edge of $\mathcal{C}_{\ell,1}(N)$ arises from an isogeny induced by $\mathfrak{l}$ or $\overline{\mathfrak{l}}.$ Since $C_{\ell,1}(N)$ is a directed graph, then an isogeny induced by $\overline{\mathfrak{l}}$ does not necessarily represent a reverse edge to that arising from $\mathfrak{l}$ as in the case of $G_{\ell}$ or $G_{\ell,0}(N).$ Any isogeny mapping $v$ to $v'$ must be an endomorphism since it maps a point $P$ on $E$ to a point $P'$ which also lies on $E$ and thus proves the first claim. $\alpha$ be such an endomorphism.

  Any path between principal vertices is the result of an isogeny induced by some ideal $\mathfrak{b}$ belonging to the group $\left\langle \mathfrak{l}, \overline{\mathfrak{l}}  \right\rangle$. Since $v$ and $v'$ are on the same component, they are connected by a path arising from $\mathfrak{b}$ where $\mathfrak{b} = \alpha \mathcal{O}.$
\end{proof}

\begin{thm}\label{thm:crater-structure-1}
  Let $v = (E, P)$ be a vertex on a component $\mathcal{C}_{\ell,1}(N) \subseteq C_{\ell,1}(N)$ where $P \in E[N]$ is of order $N,$ and let $\mathfrak{a} = (P, N)$ be the $\mathcal{O}$-ideal generated by $P$ and $N.$ Then $\#\mathcal{C}_{\ell,1}(N) = nm_{\mathfrak{a}}$ where $m_{\mathfrak{a}}$ is the order of the group generated by the natural projections of $\lambda$, $\overline{\lambda}$ in $(\mathcal{O}/NO)^{\times}/\Stab(P).$ Further,  $C_{\ell,1}(N)$ contains $\frac{\phi_{\mathcal{O}}(\mathfrak{a}^{-1}N \mathcal{O})}{2m_{\mathfrak{a}}}$ components isomorphic to $\mathcal{C}_{\ell,1}(N).$
\end{thm}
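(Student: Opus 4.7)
The plan is to mirror the argument of \Cref{thm:crater-structure-0}, substituting the $\Gamma_1$-analogues: \Cref{prop:stab-gamma-1} in place of \Cref{prop:stab-gamma-0} for stabilisers, \Cref{lem:principal-vertex-1} in place of \Cref{lem:principal-vertex-0} for principal vertices, and identification by $\pm 1$ in place of identification by $(\mathbb{Z}/N\mathbb{Z})^{\times}$. The directedness of $\mathcal{C}_{\ell,1}(N)$ will enter through \Cref{lem:principal-vertex-1}, which allows principal ideals at $v$ to arise from the whole group $\langle \mathfrak{l}, \overline{\mathfrak{l}}\rangle$ rather than merely from powers of $\mathfrak{l}$.

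For the first claim, each directed edge is induced by $\mathfrak{l}$ or $\overline{\mathfrak{l}}$, both of which have order $n$ in $Cl(\mathcal{O})$, so any closed walk at $v$ projects onto a closed walk on $C_{\ell}$ whose length is a multiple of $n$; hence it suffices to count $\Gamma_1$-principal vertices of $v$. By \Cref{lem:principal-vertex-1} these correspond to principal ideals $\alpha\mathcal{O}$ lying in $\langle \mathfrak{l}, \overline{\mathfrak{l}}\rangle$, and using $\mathfrak{l}^n = \lambda\mathcal{O}$, $\overline{\mathfrak{l}}^n = \overline{\lambda}\mathcal{O}$ together with $\mathcal{O}^{\times} = \{\pm 1\}$ (since $j \neq 0, 1728$), every such $\alpha$ has the form $\pm \lambda^a \overline{\lambda}^b$ with $a,b \in \mathbb{Z}$. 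Two such generators induce the same principal vertex exactly when their ratio lies in $\Stab(P)$ in the sense of \Cref{prop:stab-gamma-1}. Therefore the number of principal vertices equals the order $m_{\mathfrak{a}}$ of the subgroup generated by the images of $\lambda, \overline{\lambda}$ in $(\mathcal{O}/N\mathcal{O})^{\times}/\Stab(P)$, and multiplying by the cycle length $n$ gives $\#\mathcal{C}_{\ell,1}(N) = nm_{\mathfrak{a}}$.

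For the component count I would fix $\mathfrak{a}$ and enumerate the $\Gamma_1$-structures on $E$ with $(P, N) = \mathfrak{a}$. Via \Cref{lem:O/N isomorphism}, the admissible $P$'s correspond, after dividing by a (locally chosen) generator of $\mathfrak{a}$, to units of $\mathcal{O}/\mathfrak{a}^{-1}N\mathcal{O}$, giving $\phi_{\mathcal{O}}(\mathfrak{a}^{-1}N\mathcal{O})$ choices by \eqref{eq:general-euler-phi}. The equivalence $(E,P) \sim (E,-P)$ contributes a factor of $\tfrac{1}{2}$, and dividing further by the $m_{\mathfrak{a}}$ principal vertices in each component yields the stated count. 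All components obtained this way are isomorphic to $\mathcal{C}_{\ell,1}(N)$ because $\Stab(P)$, and hence $m_{\mathfrak{a}}$, depends on $P$ only through the ideal $\mathfrak{a}$.

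The step I expect to be the main obstacle is the bijection above when $\mathfrak{a}$ is not principal, since one cannot globally write $P$ as ``a generator of $\mathfrak{a}$ times a unit''. I would handle this by applying the Chinese Remainder Theorem to the prime decomposition of $N\mathcal{O}$, reducing to local rings where $\mathfrak{a}$ is automatically principal and the unit count is immediate from \eqref{eq:general-euler-phi}. A smaller point worth verifying separately is the ramified case $\overline{\mathfrak{l}} = \mathfrak{l}$, in which $\overline{\lambda}$ equals $\lambda$ up to a unit and $m_{\mathfrak{a}}$ collapses to the order of the image of $\lambda$ alone; one should confirm that the statement still reads correctly in that degenerate setting.
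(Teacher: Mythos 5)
Your proposal takes essentially the same route as the paper's (deliberately terse) proof, which simply instructs the reader to repeat the argument of \Cref{thm:crater-structure-0} with $\Stab(P)$ from \Cref{prop:stab-gamma-1} in place of $\Stab(G)$, $\langle \lambda, \overline{\lambda}\rangle$ in place of $\langle\lambda\rangle$ owing to the directedness flagged in \Cref{lem:principal-vertex-1}, and the denominator $2$ replacing $\phi(N)$ because the only identification on $P$'s is $P \leftrightarrow -P$. Your version fleshes out the substitution more carefully than the paper does, and your two caveats --- handling non-principal $\mathfrak{a}$ locally via CRT using the prime decomposition of $N\mathcal{O}$, and checking the degenerate ramified case $\overline{\mathfrak{l}} = \mathfrak{l}$ where $\langle\lambda,\overline{\lambda}\rangle$ collapses to $\langle\lambda\rangle$ --- are both genuine points that the published proof passes over silently.

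One small but real slip occurs in the justification of the first claim: the assertion that ``any closed walk at $v$ projects onto a closed walk on $C_\ell$ whose length is a multiple of $n$'' is false. A walk using $\mathfrak{l}$ a total of $a$ times and $\overline{\mathfrak{l}}$ a total of $b$ times returns to the $j$-invariant of $E$ exactly when $n \mid a-b$ (since $\overline{\mathfrak{l}} = \mathfrak{l}^{-1}$ in $Cl(\mathcal{O})$), while the projected walk has length $a+b$, which can be $2, 4, \dots$ regardless of $n$. Moreover, even if the premise held, it would not by itself yield ``it suffices to count principal vertices.'' The step you actually need --- and the one the paper also leaves implicit --- is that the component $\mathcal{C}_{\ell,1}(N)$ covers $C_\ell$ with all fibres of equal size (by transitivity of the class-group action), and the fibre over $E$ is precisely the set of $\Gamma_1$-principal vertices of $v$; this gives $\#\mathcal{C}_{\ell,1}(N) = n\,m_{\mathfrak{a}}$ directly. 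Your subsequent count of principal vertices via the image of $\langle\lambda,\overline{\lambda}\rangle$ in $(\mathcal{O}/N\mathcal{O})^{\times}/\Stab(P)$, and the component count obtained by dividing $\phi_{\mathcal{O}}(\mathfrak{a}^{-1}N\mathcal{O})$ by $2m_{\mathfrak{a}}$, are both correct.
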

\begin{proof}
  The proof follows that of \Cref{thm:crater-structure-0}, but with making the necessary change in stabiliser subgroup and utilising \Cref{lem:principal-vertex-1}. As observed in \Cref{lem:principal-vertex-1}, we must look at the group generated by the natural projections of $\lambda$, $\overline{\lambda}$ since $C_{\ell,1}(N)$ is a directed graph. The difference in the second claim is that we instead partition $\frac{\phi_{\mathcal{O}}(\mathfrak{a}^{-1}N \mathcal{O})}{2}$ elements where the denominator of $2$ comes from the fact that elements are equivalent if and only if they differ by a factor of $-1.$
\end{proof}
As in \Cref{cor:crater-structure-0}, if we restrict ourselves to the case where $N$ is prime, then we can describe the component sizes of  $C_{\ell,1}(N).$
\begin{cor}\label{cor:crater-structure-1}
  Let $N$ be a prime different from $\ell$ and $p,$ and let $m$ denote the order of the group generated by the natural projections of $\lambda$, $\overline{\lambda}$ in $(\mathcal{O}/N \mathcal{O})^{\times}/\left\{ \pm 1 \right\}.$ Then the graph $G_{\ell,1}(N)$ consists of one of the following:
  \begin{enumerate}[label=(\roman*)]
  \item If $N$ is inert, then there are $\frac{N^2-1}{2m}$ components each of size $nm.$
  \item If $N\mathcal{O} = \mathfrak{N}_1 \mathfrak{N}_2,$ then there are $\frac{(N-1)^2}{2m}$ components each of size $nm$, $\frac{(N-1)}{2m_1}$ components each of size $nm_1$ and $\frac{N-1}{2m_2}$ components each of size $nm_2$ where $m_1$, $m_2$ are the orders of the groups generated by the natural projections of $\lambda$, $\overline{\lambda}$ in $(\mathcal{O}/\mathfrak{N}_1)^{\times}$ and $(\mathcal{O}/\mathfrak{N}_2)^{\times}$ respectively.
  \item If $N\mathcal{O} = \mathfrak{N}^2,$ then there are $\frac{N^2-N}{2m}$ components each of size $nm$ and $(N-1)/2m$ components each of size $nm_1$ where $m_1$ is the order of the group generated by the natural projections of $\lambda$, $\overline{\lambda}$ in $(\mathcal{O}/\mathfrak{N})^{\times}/\left\{ \pm 1 \right\}.$
  \end{enumerate}
\end{cor}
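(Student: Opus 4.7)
The plan is to run the argument of \Cref{cor:crater-structure-0} almost verbatim, but with \Cref{thm:crater-structure-1} in place of \Cref{thm:crater-structure-0} and the stabiliser subgroup $\{\pm 1\}$ in place of $(\mathbb{Z}/N\mathbb{Z})^{\times}$. Since $N$ is a rational prime, the possible ideals $\mathfrak{a} = (P, N\mathcal{O})$ that can arise from a $\Gamma_{1}(N)$-level structure $P$ form a short list of divisors of $N\mathcal{O}$, determined by how $N$ decomposes in $\mathcal{O}$. For each such $\mathfrak{a}$, I would compute $\phi_{\mathcal{O}}(\mathfrak{a}^{-1}N\mathcal{O})$ via \eqref{eq:general-euler-phi}, identify the quotient $(\mathcal{O}/\mathfrak{a}^{-1}N\mathcal{O})^{\times}/\{\pm 1\}$ in which the projections of $\lambda,\overline{\lambda}$ generate a subgroup of order $m_{\mathfrak{a}}$, and plug the data into \Cref{thm:crater-structure-1}.

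In case (i), $\mathcal{O}/N\mathcal{O}$ is the field of order $N^{2}$, so for every $P$ of order $N$ we must have $\mathfrak{a} = \mathcal{O}$. Then $\phi_{\mathcal{O}}(N\mathcal{O}) = N^{2} - 1$, $\Stab(P) = \{\pm 1\}$ by \Cref{prop:stab-gamma-1}, and $m_{\mathfrak{a}} = m$, yielding $\frac{N^{2}-1}{2m}$ components of size $nm$. In case (ii), the possible values are $\mathfrak{a} \in \{\mathcal{O}, \mathfrak{N}_{1}, \mathfrak{N}_{2}\}$. Taking $\mathfrak{a} = \mathcal{O}$ gives $\phi_{\mathcal{O}}(N\mathcal{O}) = (N-1)^{2}$ and produces $\frac{(N-1)^{2}}{2m}$ components of size $nm$; taking $\mathfrak{a} = \mathfrak{N}_{i}$ leaves $\mathfrak{a}^{-1}N\mathcal{O} = \mathfrak{N}_{j}$ with $j \neq i$, $\phi_{\mathcal{O}}(\mathfrak{N}_{j}) = N - 1$, and $m_{\mathfrak{a}}$ specialises to $m_{j}$, giving the two additional families.

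For case (iii), the divisors of $N\mathcal{O} = \mathfrak{N}^{2}$ are $\mathcal{O}$, $\mathfrak{N}$, and $\mathfrak{N}^{2}$, but the last forces $P \in N\mathcal{O}$ (so $P$ would not have additive order $N$), leaving only $\mathfrak{a} \in \{\mathcal{O}, \mathfrak{N}\}$. Taking $\mathfrak{a} = \mathcal{O}$ and using $|(\mathcal{O}/N\mathcal{O})^{\times}| = N^{2} - N$ gives $\frac{N^{2}-N}{2m}$ components of size $nm$; taking $\mathfrak{a} = \mathfrak{N}$ gives $\mathfrak{a}^{-1}N\mathcal{O} = \mathfrak{N}$, $\phi_{\mathcal{O}}(\mathfrak{N}) = N - 1$, and $m_{\mathfrak{a}} = m_{1}$, producing $\frac{N-1}{2m_{1}}$ components of size $nm_{1}$.

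The main obstacle is the bookkeeping around the stabiliser in each sub-case: one has to verify that every divisor $\mathfrak{a}$ of $N\mathcal{O}$ in the allowed list is actually realised as $(P, N\mathcal{O})$ for some $\Gamma_{1}(N)$-structure $P$, and that after quotienting by $\Stab(P)$ from \Cref{prop:stab-gamma-1} the relevant group is precisely $(\mathcal{O}/\mathfrak{a}^{-1}N\mathcal{O})^{\times}/\{\pm 1\}$. A clean sanity check is the identity $\sum_{\mathcal{C}_{\ell,1}(N)} \#\mathcal{C}_{\ell,1}(N) = n \cdot (\text{total } \Gamma_{1}(N)\text{-structures per vertex})$, which by \Cref{lem:volcano-growth}(ii) should equal $\tfrac{n N \phi(N)}{2}\prod_{p \mid N}(1+\tfrac{1}{p})$ and thus telescopes the three cases to a common total in each decomposition type.
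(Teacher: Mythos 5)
Your proposal is correct and takes essentially the same approach as the paper: fix a vertex $v=(E,P)$, classify the possible ideals $\mathfrak{a}=(P,N\mathcal{O})$ according to how $N$ decomposes, compute $\Stab(P)$ via \Cref{prop:stab-gamma-1}, and feed everything through \Cref{thm:crater-structure-1}. The paper's proof skips case (i) by reference to \Cref{cor:crater-structure-0}(i) and phrases case (ii) by fixing $P$ and using CRT to write $(\mathcal{O}/N\mathcal{O})^{\times}\simeq(\mathcal{O}/\mathfrak{N}_1)^{\times}\times(\mathcal{O}/\mathfrak{N}_2)^{\times}$, but this is the same bookkeeping you describe in your $\mathfrak{a}$-centric formulation.
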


\begin{proof}
  The proof of (i) is similar to that of \Cref{cor:crater-structure-0}~(i), and so we will focus on (ii) and (iii).

  As in \Cref{cor:crater-structure-0}, we fix a vertex $v = (E, P)$ where $E$ is on $C_{\ell}$ and $P$ is an element of $\mathcal{O}/N\mathcal{O}$ corresponding to a $\Gamma_1(N)$-level structure of $E.$ In the case where $N\mathcal{O} = \mathfrak{N}_1 \mathfrak{N}_2,$ if $P \in (\mathcal{O}/N\mathcal{O})^{\times},$ then by \Cref{prop:stab-gamma-1} we have $\Stab(P) = \left\{ \alpha \in \mathcal{O}/N \mathcal{O} \mid \alpha \equiv \pm 1 \mod N \right\}.$ The component containing $v$ will then be of size $nm.$ As $\phi_{\mathcal{O}}(N \mathcal{O}) = (N-1)^2,$ and we identify $P$ with $-P,$ we will have $\frac{(N-1)^2}{2m}$ components of size $nm.$ If instead $P \in \mathfrak{N}_1$ (resp. $\mathfrak{N}_2$), then by \Cref{prop:stab-gamma-1}, we instead have $\Stab(P) = \left\{ \alpha \in \mathcal{O}/N \mathcal{O} \mid \alpha \equiv \pm 1 \mod{\mathfrak{N}_2} \right\}$ (resp. $\left\{ \alpha \in \mathcal{O}/N \mathcal{O} \mid \alpha \equiv \pm 1 \mod{\mathfrak{N}_1} \right\}$). By the CRT, $(\mathcal{O}/N\mathcal{O})^{\times} \simeq (\mathcal{O}/\mathfrak{N}_1)^{\times} \times \mathcal{O}/(\mathfrak{N}_2)^{\times}$ and under this isomorphism $P$ may be written as $(0 \mod{\mathfrak{N}_{1}}, P \mod{\mathfrak{N}_2})$ (resp. $(P \mod{\mathfrak{N}_1}, 0 \mod{\mathfrak{N}_{2}})$). It follows that the component containing $v$ will be of size $nm_2$ (resp. $nm_1$). Since $\# (\mathcal{O}/\mathfrak{N}_1)^{\times}/\left\{ \pm 1 \right\} = \#(\mathcal{O}/\mathfrak{N}_2)^{\times}/\left\{ \pm 1 \right\} = \frac{N-1}{2},$ it follows that there are $\frac{N-1}{2m_2}$ (resp. $\frac{N-1}{2m_1}$) components of size $nm_2$ (resp. $nm_1$).

  The proof of (iii) is the same, but with the change that $\phi_{\mathcal{O}}(N \mathcal{O}) = N^2-N.$
\end{proof}

\subsection{Crater graphs $C_{\ell}(N)$}
\begin{defn}\label{defn:principal-vertex-full}
  Let $(P, Q)$ be $\Gamma(N)$-level structure of $E.$ Then the vertex $v = (E, P, Q)$ is on some component $\mathcal{C}_{\ell}(N) \subseteq C_{\ell}(N).$ We say a vertex $v' \in \mathcal{C}_{\ell}(N)$ is a \emph{$\Gamma$-principal vertex of $v$} if it is of the form $(E, P', Q')$ for some $P', Q' \in E[N]$ which are a basis for $E[N].$
\end{defn}

\begin{lem}\label{lem:principal-vertex-full}
Let $v = (E,P,Q)$ be a vertex on $\mathcal{C}_{\ell}(N) \subseteq C_{\ell}(N).$ If $v' = (E, P',Q')$ is a $\Gamma$-principal vertex of $v,$ then there exists an endomorphism $\alpha \in \Endo(E)$ such that $\alpha(P) = \pm P'$ and $\alpha(Q) = \pm Q'$. Furthermore, there exists some ideal $\mathfrak{b}$ in the group $\left\langle \mathfrak{l}, \overline{\mathfrak{l}} \right\rangle$ such that $\mathfrak{b} = \alpha\mathcal{O}.$
\end{lem}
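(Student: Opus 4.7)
The plan is to proceed by direct analogy with the proofs of \Cref{lem:principal-vertex-0} and \Cref{lem:principal-vertex-1}. First, since $v$ and $v'$ lie on the same component $\mathcal{C}_{\ell}(N)$, there is a walk in the (directed) graph joining them, and each edge of this walk is an isogeny of degree $\ell$ induced by either $\mathfrak{l}$ or $\overline{\mathfrak{l}}$. Composing the isogenies along the walk produces an isogeny $\varphi\colon E \to E$, which is an endomorphism because both endpoints share the same underlying curve $E$. Set $\alpha$ to be this endomorphism.

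Next I would identify the corresponding ideal. The composition of isogenies along the walk corresponds to a product of the form $\mathfrak{l}^{a}\overline{\mathfrak{l}}^{b}$ where the exponents record how often each prime is traversed; call this ideal $\mathfrak{b}\in\langle\mathfrak{l},\overline{\mathfrak{l}}\rangle$. The isogeny $\varphi_{\mathfrak{b}}\colon E\to E/E[\mathfrak{b}]$ lands back on (the isomorphism class of) $E$, so the action of $[\mathfrak{b}]$ on $\EllO(\mathbb{F}_q)$ fixes $E$, and hence $[\mathfrak{b}]$ is trivial in $Cl(\mathcal{O})$. This yields $\mathfrak{b}=\alpha\mathcal{O}$, proving the second assertion.

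For the first assertion, I would then unwind how $\alpha$ acts on the level structure. By \Cref{defn:level-struct-volcanoes} and the equivalence relation of \Cref{def:vertex-equivalence}, walking from $v$ to $v'$ in $\mathcal{C}_{\ell}(N)$ means that the pair $(\alpha(P),\alpha(Q))$ is equivalent to $(P',Q')$ under some $\psi\in\Auto(E)$. Because we have restricted to $j$-invariants different from $0$ and $1728$, we have $\Auto(E)=\{\pm 1\}$, and so $(\alpha(P),\alpha(Q)) = (\pm P', \pm Q')$ with a single consistent choice of sign governing both components.

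The main subtlety I expect is the bookkeeping in the third step: one must recognise that the two signs in the statement ``$\alpha(P)=\pm P'$ and $\alpha(Q)=\pm Q'$'' are not independent but are tied together by the single automorphism $[-1]\in\Auto(E)$ acting simultaneously on both points. Once this is clear, the argument is essentially identical in structure to \Cref{lem:principal-vertex-1}, with the only new ingredient being that the level structure now consists of an ordered basis rather than a single point, which plays no role in the reasoning above.
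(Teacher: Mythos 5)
Your proposal matches the paper's intent: the paper's proof of this lemma simply says it is ``similar to that of \Cref{lem:principal-vertex-1}'', and your argument is exactly that adaptation spelled out in full --- compose the $\mathfrak{l}$- and $\overline{\mathfrak{l}}$-isogenies along a walk from $v$ to $v'$ to get an endomorphism $\alpha$, observe that the corresponding ideal $\mathfrak{b}\in\langle\mathfrak{l},\overline{\mathfrak{l}}\rangle$ fixes the isomorphism class of $E$ and is hence principal, and read off the sign ambiguity on both $P$ and $Q$ from $\Auto(E)=\{\pm1\}$. Your remark that the two $\pm$ signs in the statement are tied together by the single automorphism $[-1]$ is a correct and useful clarification that the paper leaves implicit.
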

\begin{proof}
  The proof is similar to that of \Cref{lem:principal-vertex-1}.
\end{proof}

As we will see shortly, the full level $N$ structure case turns out to be the simplest of the three cases to describe.

\begin{thm}\label{thm:crater-structure-full}
  Let $v = (E, P, Q)$ be a vertex on a component $\mathcal{C}_{\ell}(N) \subseteq C_{\ell}(N)$ where $P, Q$ are a $\mathbb{Z}$-basis for $E[N].$ Then $\# \mathcal{C}_{\ell}(N) = n m $ where $m$ is the order of the group generated by the natural projections of $\lambda$, $\overline{\lambda}$ in $(\mathcal{O}/N \mathcal{O})^{\times}/{\pm 1}.$ Further, $C_{\ell}(N)$ contains
  \[
    \frac{N^2\phi(N)^2}{2m}\prod_{p \mid N}\left( 1 + \frac{1}{p} \right)
  \]
  components isomorphic to $\mathcal{C}_{\ell}(N).$
\end{thm}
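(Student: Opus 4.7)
The plan is to follow the template of Theorems \ref{thm:crater-structure-0} and \ref{thm:crater-structure-1}, substituting the stabiliser computation from Proposition \ref{prop:stab-full-level}. A key simplification here is that $\Stab((P,Q)) = \{\pm 1\}$ regardless of the basis $(P, Q)$, so no auxiliary ideal $\mathfrak{a}$ enters the analysis and every component turns out to be of the same size.

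First I would establish the component size $nm$. Fix $v = (E, P, Q) \in \mathcal{C}_{\ell}(N)$. By \Cref{lem:principal-vertex-full}, every $\Gamma$-principal vertex of $v$ arises from an endomorphism $\alpha \in \Endo(E)$ with $\alpha\mathcal{O} \in \langle \mathfrak{l}, \overline{\mathfrak{l}} \rangle$, and such endomorphisms are parametrised (up to units) by elements of $\langle \lambda, \overline{\lambda} \rangle$. Using \Cref{lem:O/N isomorphism} to identify $E[N]$ with $\mathcal{O}/N\mathcal{O}$, the action of $\alpha$ on the ordered pair $(P, Q)$ becomes multiplication in $(\mathcal{O}/N\mathcal{O})^{\times}$, and two such multipliers yield the same vertex precisely when they agree modulo $\Stab((P,Q)) = \{\pm 1\}$ by \Cref{prop:stab-full-level}. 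Hence the number of distinct $\Gamma$-principal vertices of $v$ equals $m$, the order of the image of $\langle \lambda, \overline{\lambda} \rangle$ in $(\mathcal{O}/N\mathcal{O})^{\times}/\{\pm 1\}$. Since the orbit of $v$ under the action induced by $\mathfrak{l}$ on the underlying curves has length $n$, the component contains exactly $nm$ vertices.

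For the count of components I would argue globally. By \Cref{lem:volcano-growth}(iii) the total number of vertices in $C_{\ell}(N)$ is
\[
n \cdot \frac{N^2 \phi(N)^2}{2} \prod_{p \mid N}\left( 1 + \frac{1}{p} \right).
\]
Because $\Stab((P, Q)) = \{\pm 1\}$ is independent of the basis, the integer $m$ is the same for every component, and the graph structure on each component is governed by the same Cayley-like action of $\langle \lambda, \overline{\lambda} \rangle$ on $(\mathcal{O}/N\mathcal{O})^{\times}/\{\pm 1\}$; thus all components are isomorphic to $\mathcal{C}_{\ell}(N)$. Dividing the total vertex count by the component size $nm$ yields the stated formula.

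The main conceptual point (rather than an obstacle) is confirming that components are genuinely isomorphic and not merely equinumerous; this is exactly what makes the full level case cleaner than the $\Gamma_0$ and $\Gamma_1$ cases, where the ideal $\mathfrak{a} = (P, N)$ varies and forces different component sizes $nm_{\mathfrak{a}}$. For a full basis of $E[N]$ one always has $(P, Q, N\mathcal{O}) = \mathcal{O}$, which removes this variation and allows the answer to be expressed by a single integer $m$.
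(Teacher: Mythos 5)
Your proposal is correct and takes essentially the same approach as the paper's proof: both fix a vertex, invoke \Cref{prop:stab-full-level} to get $\Stab((P,Q)) = \{\pm 1\}$, use \Cref{lem:principal-vertex-full} to identify the number of $\Gamma$-principal vertices with the order $m$ of $\langle \lambda, \overline{\lambda} \rangle$ in $(\mathcal{O}/N\mathcal{O})^{\times}/\{\pm 1\}$, and then divide the vertex count from \Cref{lem:volcano-growth}~(iii) by the component size. The only cosmetic difference is that you divide the total crater vertex count by $nm$, whereas the paper divides the per-curve count by $m$; your closing observation that $(P,Q,N\mathcal{O}) = \mathcal{O}$ eliminates the $\mathfrak{a}$-dependence is exactly the structural reason the full-level case is cleaner than the $\Gamma_0$ and $\Gamma_1$ cases.
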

\begin{proof}
  We first fix a vertex $v = (E, P, Q)$ where $E$ is on $C_{\ell}$ and $P, Q$ are a $\mathbb{Z}$-basis for $E[N].$ From \Cref{prop:stab-full-level}, we have that $\Stab((P,Q)) = \left\{ \alpha \in \mathcal{O}/N \mathcal{O} \mid \alpha \equiv \pm 1 \mod N \mathcal{O} \right\}.$ The number of principal vertices of $v$ is given by the order $m$ of the group generated by the projections of $\lambda$, $\overline{\lambda}$ in $(\mathcal{O}/N\mathcal{O})^{\times}/\left\{ \pm 1 \right\}$ and so the component containing $v$ is of size $nm.$ \Cref{lem:volcano-growth}~(iii) gives us the number of vertices containing $E,$ and division by $m$ gives the desired result.
\end{proof}
We immediately see a difference between \Cref{thm:crater-structure-full} and the theorems describing $G_{\ell,0}(N)$ and $G_{\ell,1}(N)$ graphs: the structure is independent of the prime ideal factorisation of $N \mathcal{O}.$ However, given a vertex $v = (E, P, Q),$ the factorisation of $N \mathcal{O}$ can place restrictions on the possible principal vertices of $v.$ For example, if $N = \mathfrak{N}_1 \mathfrak{N}_2$ and $P \in \mathfrak{N}_1, Q \in (\mathcal{O}/N\mathcal{O})^{\times},$ then any principal vertex $v' = (E, P', Q')$ of $v$ must have the same structure (i.e. $P' \in \mathfrak{N}_1, Q' \in (\mathcal{O}/N\mathcal{O})^{\times}$).

\section{Class Field Theory}\label{sec:class-field-theory}
We now extend these results and use the language of class field theory to describe a group action on the various level structures.  We will begin by briefly discussing congruence subgroups. Much of the content we discuss here is covered in more detail in~\cite{cox}.

\subsection{Generalised Ideal Class Groups}
Let $K$ be an imaginary quadratic field and $\mathcal{O}$ an order in $K.$ We will let $\mathcal{I}_{\mathcal{O}}$ denote the group of proper fractional $\mathcal{O}$-ideals and $\mathcal{P}_{\mathcal{O}} \subseteq \mathcal{I}_{\mathcal{O}}$ the subgroup of fractional principal $\mathcal{O}$-ideals. In the imaginary quadratic setting\footnote{For general number fields, one needs to consider the real infinite primes dividing $\mathfrak{m}.$ Our focus is only on the imaginary quadratic setting where $K$ has no real infinite primes.}, a modulus $\mathfrak{m} \subseteq \mathcal{O}$ is simply an $\mathcal{O}$-ideal. For a modulus $\mathfrak{m},$ we let $\mathcal{I}_{\mathcal{O}}(\mathfrak{m})$ denote the group of proper fractional $\mathcal{O}$-ideals which are coprime to $\mathfrak{m},$ $\mathcal{P}_{\mathcal{O}}(\mathfrak{m}) \subseteq \mathcal{I}_{\mathcal{O}}(\mathfrak{m})$ denote the subgroup of principal ideals which are coprime to $\mathfrak{m},$ and $\mathcal{P}_{\mathcal{O},1}(\mathfrak{m}) \subseteq \mathcal{P}_{\mathcal{O}}$ denote the subgroup of principal ideals generated by $\alpha \in K^{\times}$ such that $\alpha \equiv 1 \mod \mathfrak{m}.$ A subgroup $H \subseteq \mathcal{I}_{\mathcal{O}}(\mathfrak{m})$ satisfying $\mathcal{P}_{\mathcal{O},1}(\mathfrak{m}) \subseteq H \subseteq \mathcal{I}_{\mathcal{O}}(\mathfrak{m})$ is called a \emph{congruence subgroup} for $\mathfrak{m}.$ Given a congruence subgroup $H,$ we may form the quotient group
\[
  Cl_{\mathcal{O}}(\mathfrak{m}) = \mathcal{I}_{\mathcal{O}}(\mathfrak{m}) / H
\]
which we call a \emph{generalised ideal class group} for $\mathfrak{m}.$

A trivial example of a congruence subgroup we are familiar with is obtained by setting $\mathfrak{m} = 1.$ In this setting, any principal ideal $(\alpha)$ satisfies $\alpha \equiv 1 \mod \mathfrak{m}$ and so $\mathcal{P}_{\mathcal{O},1} = \mathcal{P}_{\mathcal{O}}.$ Then the generalised ideal class group is the usual ideal class group $Cl_{\mathcal{O}} = \mathcal{I}_{\mathcal{O}}/\mathcal{P}_{\mathcal{O}}$ we are familiar with.

In this paper, we focus on the setting $\mathfrak{m} = N\mathcal{O}$ where $N$ is defined as in the previous sections. The two congruence subgroups we are interested in are $\mathcal{P}_{\mathcal{O},1}(N),$ and
\[
\mathcal{P}_{\mathcal{O},\mathbb{Z}}(N) = \left\{ (\alpha) \mid \alpha \equiv c \mod N\mathcal{O} \text{ for } c \in \mathbb{Z} \text{ with } (c, N) = 1 \right\}.
\]
The first of these congruence subgroups is generated by the elements which fix all of a $\Gamma_1(N)$ and $\Gamma(N)$-level structure, while the second fixes all of a $\Gamma_0(N)$-level structure.

\begin{prop}\label{prop:cong-subgroup-integers}
  Let $\mathcal{O}, \mathcal{O}'$ be orders such that $\mathcal{O}' \subseteq \mathcal{O}$ and $[\mathcal{O}:\mathcal{O}'] = f.$ Then
  \[
    Cl_{\mathcal{O}'} \simeq \mathcal{I}_{\mathcal{O}}(f)/\mathcal{P}_{\mathcal{O},\mathbb{Z}}(f).
  \]
\end{prop}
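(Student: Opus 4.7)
The plan is to reduce to the classical result proved in Cox~\cite{cox} for the pair (maximal order, suborder of conductor $f$), and to make the generalisation to an arbitrary nested pair $\mathcal{O}' \subseteq \mathcal{O}$ explicit. The key structural fact I would exploit is that for orders in an imaginary quadratic field with $[\mathcal{O}:\mathcal{O}'] = f$, one has $\mathcal{O}' = \mathbb{Z} + f\mathcal{O}$, since $\mathcal{O} = \mathbb{Z}[\tau]$ for some $\tau$ forces $\mathcal{O}' = \mathbb{Z}[f\tau]$. In particular $f\mathcal{O} \subseteq \mathcal{O}'$, so every element of $\mathcal{O}'$ is congruent to a rational integer modulo $f\mathcal{O}$.

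The first main step is to set up an ideal correspondence between proper ideals of the two orders coprime to $f$: the maps $\mathfrak{a} \mapsto \mathfrak{a}\mathcal{O}$ and $\mathfrak{A} \mapsto \mathfrak{A} \cap \mathcal{O}'$ should give mutually inverse group isomorphisms $\mathcal{I}_{\mathcal{O}'}(f) \cong \mathcal{I}_{\mathcal{O}}(f)$. This is the analogue of Cox's Proposition~7.20, and the same proof goes through since it only uses that $\mathcal{O}'$ and $\mathcal{O}$ agree after localisation at any prime not dividing $f$. Combined with the standard observation that every class in $Cl_{\mathcal{O}'}$ admits a representative coprime to $f$, I obtain a surjection $\mathcal{I}_{\mathcal{O}}(f) \twoheadrightarrow Cl_{\mathcal{O}'}$ whose kernel is the image of $\mathcal{P}_{\mathcal{O}'}(f)$ under $\mathfrak{a} \mapsto \mathfrak{a}\mathcal{O}$.

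The remaining and most content-bearing step is to identify this kernel with $\mathcal{P}_{\mathcal{O},\mathbb{Z}}(f)$. For the forward inclusion, if $\alpha\mathcal{O}'$ is a principal $\mathcal{O}'$-ideal coprime to $f$, I write $\alpha = c + f\beta$ with $c \in \mathbb{Z}$ and $\beta \in \mathcal{O}$ using $\mathcal{O}' = \mathbb{Z} + f\mathcal{O}$; then $\alpha\mathcal{O}' \cdot \mathcal{O} = \alpha\mathcal{O}$ and $\alpha \equiv c \pmod{f\mathcal{O}}$, while coprimality of $N(\alpha)$ with $f$ forces $(c,f) = 1$. For the reverse inclusion, given $(\alpha) \in \mathcal{P}_{\mathcal{O},\mathbb{Z}}(f)$ with $\alpha \equiv c \pmod{f\mathcal{O}}$ and $(c,f) = 1$, the decomposition $\alpha = c + f\beta$ with $c \in \mathbb{Z} \subseteq \mathcal{O}'$ and $f\beta \in f\mathcal{O} \subseteq \mathcal{O}'$ shows $\alpha \in \mathcal{O}'$; then $\alpha\mathcal{O}' \in \mathcal{P}_{\mathcal{O}'}(f)$ maps to $\alpha\mathcal{O}$, as required.

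I expect the main obstacle to be the careful verification of the ideal correspondence in the nested-order setting rather than the specialised case $\mathcal{O} = \mathcal{O}_K$. The classical proof relies on the localisation identity $\mathcal{O}'_{\mathfrak{p}} = \mathcal{O}_{\mathfrak{p}}$ for primes $\mathfrak{p} \nmid f$, together with invertibility preservation under contraction and extension; one must double-check that these local arguments still work cleanly when $\mathcal{O}$ itself may fail to be Dedekind. The congruence identification in the third step is, by contrast, essentially a direct computation once the structural formula $\mathcal{O}' = \mathbb{Z} + f\mathcal{O}$ is in hand.
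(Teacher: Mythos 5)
Your proposal is correct and follows the same route the paper takes: the paper's entire proof is the remark that this is the relative version of Cox's Proposition~7.22 (citing \cite[Thm.~2.3.1]{class-group-action-oriented} for the relative statement), and your write-up is precisely that relative argument made explicit. The structural identity $\mathcal{O}' = \mathbb{Z} + f\mathcal{O}$ you isolate is indeed the ingredient that lets the localisation argument in the ideal correspondence carry over from the classical case $\mathcal{O} = \mathcal{O}_K$ to a non-maximal $\mathcal{O}$, so the obstacle you flag at the end is resolved exactly as you anticipate.
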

\begin{proof}
  As stated in~\cite[Thm.~2.3.1]{class-group-action-oriented}, the proof is the relative version of~\cite[Prop.~7.22]{cox}.
\end{proof}

We are now ready to discuss the group action on level structures. Given an ordinary elliptic curve $E$ with $\Endo(E) \simeq \mathcal{O}$ and a level structure $\gamma(N)$ on $E$ of type $\Gamma_0(N),$ $\Gamma_1(N),$ or $\Gamma(N),$ we may define a group action
\[
  \mathfrak{a} \cdot \gamma(N) = \varphi_{\mathfrak{a}}(\gamma(N))
\]
where $\mathfrak{a} \in \mathcal{I}_{\mathcal{O}}(N)$ and $\varphi_{\mathfrak{a}}$ is the isogeny induced by $\mathfrak{a}$ whose kernel is given by $E[\mathfrak{a}].$ Up to the equivalence relation defined in \Cref{def:vertex-equivalence}, if $\gamma(N)$ is a $\Gamma_0(N)$-level structure, then the common stabilisers among all $\Gamma_0(N)$-level structures is the set $\mathcal{P}_{\mathcal{O},\mathbb{Z}}(N)$ of principal ideals which act as scalar multiplication on $\gamma(N)$; if it is a $\Gamma_1(N)$-level structure, then common stabilisers is the set $\mathcal{P}_{\mathcal{O},1}(N)$ of principal ideals which act as multiplication by $\pm 1$ on $\gamma(N);$ and if it is a $\Gamma(N)$-level structure, its stabilisers is the set $\mathcal{P}_{\mathcal{O},1}(N)$. We can then form quotient groups
\begin{align}
  Cl_{N\mathcal{O}}(N) &= \mathcal{I}_{N\mathcal{O}}(N) / \mathcal{P}_{NO}(N) \text{ and } \label{eq:cls-grp-NO}\\
  Cl_{\mathcal{O},1}(N) &= \mathcal{I}_{\mathcal{O}}(N) / \mathcal{P}_{\mathcal{O},1}(N) \label{eq:ray-cls-grp}
\end{align}
where (\eqref{eq:cls-grp-NO}) acts on isomorphism classes of $\Gamma_0(N)$-level structures and (\eqref{eq:ray-cls-grp}) acts on isomorphism classes of $\Gamma_1(N)$ and $\Gamma(N)$-level structures. The first of these two generalised ideal class groups is simply the ideal class group for the order of index $N$ in $\mathcal{O},$ while the second is known as the \emph{ray class group} for modulus $\mathfrak{m} = N.$
\section{Examples}\label{sec:examples}
We will now look at several examples. We will add various level structures to craters. The examples covered are computed using Sagemath~\cite{sage}. The craters with level structure are constructed by explicitly computing $\ell$-isogenies and applying them to the various level structures we are interested in. The examples we look at will all consider the case where $\ell$ splits in $\mathcal{O}$ as $\ell\mathcal{O} = \mathfrak{l}\cdot\overline{\mathfrak{l}}.$ We will lift $\mathfrak{l}$ to the various groups discussed in \Cref{sec:class-field-theory} and compute its order to show consistency with the craters we construct.

\begin{eg}\label{eg:example-1}
  Let $p = 107,$ $\ell = 5,$ $N = 6,$ and $E/\mathbb{F}_p$ defined by $y^2 = x^3 + 43x + 86$ where $j(E) = 19.$ We have the discriminant of Frobenius $\Delta_{\pi} = -284 = 2^2(-71)$ and so $E$ has CM by an order $\mathcal{O}$  in $K = \mathbb{Q}(\sqrt{-71}).$ One can check that $\Endo(E) = \mathcal{O}_K = \mathbb{Z}[\Phi] = \mathbb{Z}\left[\frac{1+\sqrt{-71}}{2}\right].$ \Cref{fig:eg-ell-crater-j19} shows the crater of the component of the $\ell$-isogeny graph containing $E$ with the blue vertex highlighting the isomorphism class $E$ belongs to while
\Cref{fig:eg-Gamma0-crater-j19} shows the crater graph $C_{5,0}(6)$ obtained by adding a $\Gamma_0(5)$-level structure to the graph in \Cref{fig:eg-ell-crater-j19}.
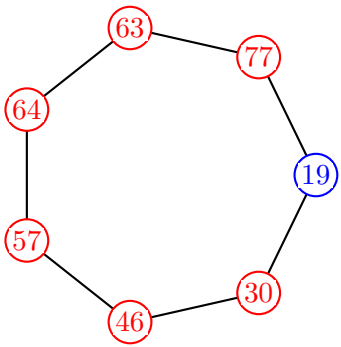
\begin{figure}
  \centering
  \begin{tikzpicture}
    \def\radius{2}
    \def\noderadius{8pt}
    \def\jinvariants{{19, 77, 63, 64, 57, 46, 30}}

    \foreach \i in {0,...,6} {
      \pgfmathsetmacro\a{\i * 360/7}
      \draw[thick, black] (\a:\radius) -- ({\a + 360/7}:\radius);
    }
    \foreach [count=\j from 0] \i in {0,...,6} {
      \pgfmathsetmacro\a{\i * 360/7}
      \pgfmathsetmacro{\jinvariant}{\jinvariants[\j]}
      \ifnum\i=0
        \draw[thick, blue, fill=white](\a:\radius) circle(\noderadius)
        node[anchor=center, fill=white, inner sep=1pt] {\jinvariant};
      \else
        \draw[thick, red, fill=white](\a:\radius) circle(\noderadius)
        node[anchor=center, fill=white, inner sep=1pt] {\jinvariant};
      \fi
    }
  \end{tikzpicture}
  \caption{Crater of 5-volcano containing $j(E) = 19$ over
    $\mathbb{F}_{107}.$}\label{fig:eg-ell-crater-j19}
\end{figure}

The factorisation of $\ell$ and $N$ into products of prime ideals is given by
  \begin{align*}
    \ell\mathcal{O} &= (5) = \mathfrak{l} \cdot \overline{\mathfrak{l}} = (5, \Phi+1)(5, \Phi+3) \\
    N\mathcal{O} &= (6) = \mathfrak{p}\cdot \overline{\mathfrak{p}} \cdot \mathfrak{q} \cdot \overline{\mathfrak{q}} = (2, \Phi)(2, \Phi + 1)(3,\Phi)(3,\Phi+2).
  \end{align*}
  The ideal class $[\mathfrak{l}]$ has order $7$ in  $Cl_K$ which is consistent with the crater size seen in \Cref{fig:eg-ell-crater-j19}.
\begin{figure}
  \centering
  \begin{tikzpicture}
    \pgfmathsetmacro\A{360/14}
    \pgfmathsetmacro\a{360/7}
    \def\R{1.5}
    \def\r{1}
    \foreach \s in {0,1,2}{
      \foreach \v in {0,1,2,...,13} {
        \begin{scope}[xshift = \s*4cm]
          \pgfmathparse{int(mod(\v, 7))}
          \draw[thick, black] (\v*\A:\R) -- (\A+\v*\A:\R);
          \ifnum\pgfmathresult=0
            \draw[thick, blue, fill=blue] (\v*\A:\R) circle (\R/2/10);
          \else
            \draw[thick, red, fill=red] (\v*\A:\R) circle (\R/2/10);
          \fi
        \end{scope}
      }
      \begin{scope}[xshift = \s*4cm]
        \draw[thick, blue, fill=blue] (0:\R) circle (\R/2/10);
      \end{scope}
    }
    \foreach \s in {0,1,2}{
      \foreach \v in {0,1,2,...,6} {
        \begin{scope}[xshift = \s*4cm, yshift = -3.25cm]
          \pgfmathparse{int(mod(\v, 7))}
          \draw[thick, black] (\v*\a:\r) -- (\a+\v*\a:\r);
          \ifnum\pgfmathresult=0
            \draw[thick, blue, fill=blue] (\v*\a:\r) circle (\R/2/10);
          \else
            \draw[thick, red, fill=red] (\v*\a:\r) circle (\R/2/10);
          \fi\
        \end{scope}
        \begin{scope}[xshift = \s*4cm, yshift = -6cm]
          \pgfmathparse{int(mod(\v, 7))}
          \draw[thick, black] (\v*\a:\r) -- (\a+\v*\a:\r);
          \ifnum\pgfmathresult=0
            \draw[thick, blue, fill=blue] (\v*\a:\r) circle (\R/2/10);
          \else
            \draw[thick, red, fill=red] (\v*\a:\r) circle (\R/2/10);
          \fi
        \end{scope}
      }
      \begin{scope}[xshift = \s*4cm, yshift = -3.25cm]
        \draw[thick, blue, fill=blue] (0:\r) circle (\R/2/10);
      \end{scope}
      \begin{scope}[xshift = \s*4cm, yshift = -6cm]
        \draw[thick, blue, fill=blue] (0:\r) circle (\R/2/10);
      \end{scope}
    }
  \end{tikzpicture}
  \caption{Crater $C_{5,0}(6)$ corresponding to adding a $\Gamma_0(6)$-level structure to the $5$-isogeny graph containing $j(E) = 19.$ Blue vertices correspond to principal vertices of $E.$}\label{fig:eg-Gamma0-crater-j19}
\end{figure}For the ideal $\mathfrak{l},$ we have $\mathfrak{l}^7 = \lambda\mathcal{O} = (-4\Phi + 281).$ If $\left\langle P \right\rangle$ is a $\Gamma_0(N)$-level structure on $E,$ we have three cases to consider:
  \begin{enumerate}[label=(\roman*)]
  \item $P$ is identified with an element of $(\mathcal{O}/N\mathcal{O})^{\times};$
  \item $P$ is identified with an element of exactly one of $\mathfrak{p}, \overline{\mathfrak{p}}, \mathfrak{q},$ or $\overline{\mathfrak{q}}$ or;
  \item $P$ is identified with an element of exactly one of $\mathfrak{p}, \overline{\mathfrak{p}}$ and exactly one of $\mathfrak{q}, \overline{\mathfrak{q}}.$
  \end{enumerate}
  For the first case, we can determine the size of the component containing $\left\langle P \right\rangle$ by considering the order of the projection of $\lambda$ in $(\mathcal{O}/N\mathcal{O})^{\times}/(\mathbb{Z}/N \mathbb{Z})^{\times}.$ We have $\#({O}/N\mathcal{O})^{\times}/(\mathbb{Z}/N \mathbb{Z})^{\times} = 4/2 = 2$ and $\lambda = -4\Phi + 281 \equiv 2\Phi + 5$ modulo $6.$ This is clearly not congruent to an integer modulo $6,$ so $\lvert \lambda \rvert= 2$ and so by \Cref{thm:crater-structure-0} we have one component which is twice the size of our original volcano. For the second case, we need to consider the order of the projection of $\lambda$ in each of
  \begin{align*}
    &(\mathcal{O}/\mathfrak{p}^{-1}N\mathcal{O})^{\times}/(\mathbb{Z}/N \mathbb{Z})^{\times}, \\
    &(\mathcal{O}/\overline{\mathfrak{p}}^{-1}N\mathcal{O})^{\times}/(\mathbb{Z}/N \mathbb{Z})^{\times}, \\
    &(\mathcal{O}/\mathfrak{q}^{-1}N\mathcal{O})^{\times}/(\mathbb{Z}/N \mathbb{Z})^{\times}; \text{ and} \\
    &(\mathcal{O}/\overline{\mathfrak{q}}^{-1}N\mathcal{O})^{\times}/(\mathbb{Z}/N \mathbb{Z})^{\times}.\\
  \end{align*}
  The last two of these rings has trivial order and so each corresponds to a component isomorphic to our original volcano. The first two rings are of size $2,$ and so it suffices to check the projection of $\lambda$ in $(\mathcal{O}/\mathfrak{p}^{-1}N\mathcal{O})^{\times}$ and $(\mathcal{O}/\overline{\mathfrak{p}}^{-1}N\mathcal{O})^{\times}.$ We compute these projections to be $2\Phi - 1$ and $-\Phi - 1$ respectively which both square to the identity element in their respective rings. Therefore, we have an additional two components which are twice of our original volcano, giving a total of three components of this size. Finally, consider when $P \in \mathfrak{p} \mathfrak{q}.$ Then
  \[
    (\mathcal{O}/\mathfrak{p}^{-1}\mathfrak{q}^{-1}N\mathcal{O})^{\times} \simeq (\mathcal{O}/\overline{\mathfrak{p}}\overline{\mathfrak{q}})^{\times} \simeq (\mathbb{Z}/N \mathbb{Z})^{\times},
  \]
  and so $\lambda$ always acts as an integer on $\left\langle P \right\rangle.$ The result holds for all possibilities in the third case, and so we have four additional components isomorphic to the original volcano for a total of six copies of the original volcano.
\end{eg}

\begin{eg}
  Let $p = 47,$ $\ell = 5,$ $N = 3$ and $E/\mathbb{F}_p$ defined by $y^2 = x^3 + 14x + 5$ where $j(E) = 8.$ The discriminant of Frobenius is given by $\Delta_{\pi} = 2^2(-31).$ We compute the endomorphism ring of $E$ to be $\Endo(E) = \mathcal{O} = \mathbb{Z}[\pi].$
  \begin{figure}[!hbtp]
  \centering
    \begin{tikzpicture}
      \def\radius{2}
      \def\noderadius{8pt}
      \def\jinvariants{{8, 34, 29}}

      \foreach \i in {0,1,2} {
        \pgfmathsetmacro\a{\i * 360/3}
        \draw[thick, black] (\a:\radius) -- ({\a + 360/3}:\radius);
      }
      \foreach [count=\j from 0] \i in {0,1,2} {
        \pgfmathsetmacro\a{\i * 360/3}
        \pgfmathsetmacro{\jinvariant}{\jinvariants[\j]}
        \ifnum\i=0
          \draw[thick, blue, fill=white](\a:\radius) circle(\noderadius)
          node[anchor=center, fill=white, inner sep=1pt] {\jinvariant};
        \else
          \draw[thick, red, fill=white](\a:\radius) circle(\noderadius)
          node[anchor=center, fill=white, inner sep=1pt] {\jinvariant};
        \fi
      }
    \end{tikzpicture}
    \caption{Crater of 5-volcano containing $j(E) = 8$ over
    $\mathbb{F}_{47}.$}\label{fig:eg-ell-crater-j8}
  \end{figure}
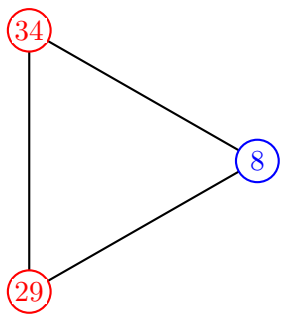
  Seen in \Cref{fig:eg-ell-crater-j8} is the crater containing the isomorphism class of $E$ and \Cref{fig:eg-Gamma0-crater-j8} shows the crater graph $C_{5,0}(3).$
  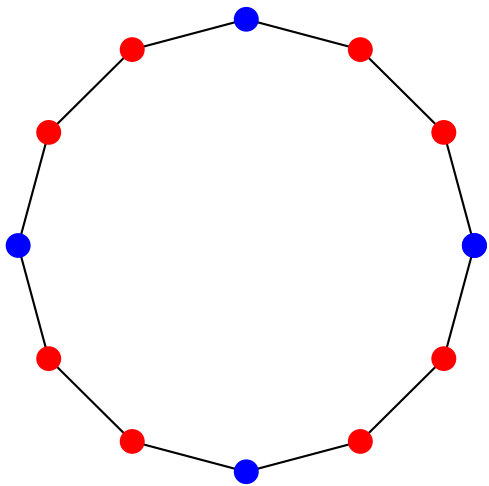
\begin{figure}[!hbtp]
    \centering
    \begin{tikzpicture}
      \pgfmathsetmacro\A{360/12}
      \pgfmathsetmacro\a{360/3}
      \def\R{3}
      \def\r{1}
        \foreach \v in {0,1,2,...,11} {
          \pgfmathparse{int(mod(\v, 3))}
          \draw[thick, black] (\v*\A:\R) -- (\A+\v*\A:\R);
          \ifnum\pgfmathresult=0
            \draw[thick, blue, fill=blue] (\v*\A:\R) circle (\R/2/10);
          \else
            \draw[thick, red, fill=red] (\v*\A:\R) circle (\R/2/10);
          \fi
        }
        \draw[thick, blue, fill=blue] (0:\R) circle (\R/2/10);
    \end{tikzpicture}
    \caption{Crater $C_{5,0}(3)$ corresponding to adding a $\Gamma_0(3)$-level structure to the $5$-isogeny graph containing $j(E) = 8.$ Blue vertices correspond to principal vertices of $E.$}\label{fig:eg-Gamma0-crater-j8}
  \end{figure}  A notable difference between the graph seen in \Cref{fig:eg-Gamma0-crater-j8} and the one seen in \Cref{fig:eg-Gamma0-crater-j19} of the previous example is the number of components. In this example,
  \[
    \ell\mathcal{O} = (5) = \mathfrak{l} \cdot \overline{\mathfrak{l}} = (5, \pi + 3)(5, \pi + 4)
  \]
  and $N$ is an inert prime, so any $\Gamma_0(N)$-level structure is generated by an element $P$ which can be identified with an element of $(\mathcal{O}/N\mathcal{O})^{\times}.$ Consequently, the crater graph $C_{5,0}(3)$ will consist of components all isomorphic to each other and by \Cref{prop:cong-subgroup-integers}, they will have size equal to the order of $[\mathfrak{l}]$ in $Cl(\mathcal{O}')$ where $\mathcal{O}' = \mathbb{Z} + 3 \mathcal{O} = \mathbb{Z} + 6 \mathcal{O}_K.$ We compute $h(O') = 12,$ and so we simply need to check whether which of $\mathfrak{l}^3,$ $\mathfrak{l}^6,$ and $\mathfrak{l}^{12}$ is principal when $\mathfrak{l}$ is viewed as an $\mathcal{O}'$-ideal. We compute
  \begin{align*}
    \mathfrak{l} &= (5, 3\pi + 2) \\
    \mathfrak{l}^3 &= (125, 3\pi + 52) \\
    \mathfrak{l}^6 &= (15625, 3\pi + 3802) \\
  \end{align*}
  none of which is principal, and so $[\mathfrak{l}]$ has order $12$ in $Cl(\mathcal{O}).$
\end{eg}

We now look at our final example which is a graph obtained by adding a $\Gamma_1(N)$-level structure.
\begin{eg}\label{eg:diff-crater-sizes}
  Let $p = 53,$ $\ell = 3,$ $N = 5,$ and $E/\mathbb{F}_p$ be defined by $y^2 = x^3 + 46x + 6$ where $j(E) = 8.$ The endomorphism ring of $E$ is given by $\Endo(E) = \mathcal{O} = \mathbb{Z} + 2 \mathbb{Z}[\Phi]$ where $\Phi = \left[ \frac{1+\sqrt{-11}}{2}\right].$

  \Cref{fig:eg-ell-crater-j8-2} shows the crater containing the isomorphism class of $E,$ and \Cref{fig:eg-Gamma1-crater-j8} shows the crater graph $C_{3,1}(5).$
  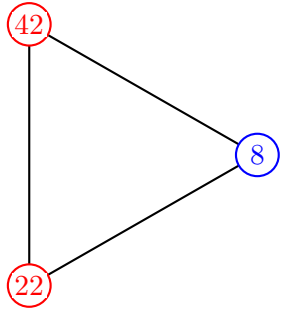
\begin{figure}[!hbtp]
    \centering
    \begin{tikzpicture}
      \def\radius{2}
      \def\noderadius{8pt}
      \def\jinvariants{{8, 42, 22}}

      \foreach \i in {0,1,2} {
        \pgfmathsetmacro\a{\i * 360/3}
        \draw[thick, black] (\a:\radius) -- ({\a + 360/3}:\radius);
      }
      \foreach [count=\j from 0] \i in {0,1,2} {
        \pgfmathsetmacro\a{\i * 360/3}
        \pgfmathsetmacro{\jinvariant}{\jinvariants[\j]}
        \ifnum\i=0
          \draw[thick, blue, fill=white](\a:\radius) circle(\noderadius)
          node[anchor=center, fill=white, inner sep=1pt] {\jinvariant};
        \else
          \draw[thick, red, fill=white](\a:\radius) circle(\noderadius)
          node[anchor=center, fill=white, inner sep=1pt] {\jinvariant};
        \fi
      }
    \end{tikzpicture}
    \caption{Crater of 3-volcano containing $j(E) = 8$ over
      $\mathbb{F}_{53}.$}\label{fig:eg-ell-crater-j8-2}
  \end{figure}

  \begin{figure}[!hbtp]
    \centering
    \begin{tikzpicture}[>=stealth]
      \pgfmathsetmacro\A{360/12}
      \pgfmathsetmacro\Amid{360/6}
      \pgfmathsetmacro\a{360/3}
      \def\R{1.5}
      \def\Rmid{1.25}
      \def\r{1}
      \foreach \s in {0,1}{
        \foreach \v in {0,1,2,...,11} {
          \begin{scope}[xshift = \s*4cm]
            \pgfmathparse{int(mod(\v, 3))}
            \draw[->, thick, black, shorten >=2pt] (\v*\A:\R) -- (\A+\v*\A:\R);
            \draw[dashed, ->, black, shorten >=2pt] (\v*\A:\R) -- (5*\A+\v*\A:\R);
            \ifnum\pgfmathresult=0
              \draw[thick, blue, fill=blue] (\v*\A:\R) circle (\R/2/10);
            \else
              \draw[thick, red, fill=red] (\v*\A:\R) circle (\R/2/10);
            \fi
          \end{scope}
        }
        \begin{scope}[xshift = \s*4cm]
          \draw[thick, blue, fill=blue] (0:\R) circle (\R/2/10);
        \end{scope}
      }
      \foreach \v in {0,1,2,...,5}{
        \begin{scope}[xshift = 0cm, yshift = -3.25cm]
          \pgfmathparse{int(mod(\v, 3))}
          \draw[->, thick, black, shorten >=2pt] (\v*\Amid:\Rmid) -- (\Amid+\v*\Amid:\Rmid);
          \draw[dashed, ->, black, shorten >=2pt] (\v*\Amid:\Rmid) -- (2*\Amid + \v*\Amid:\Rmid);
          \ifnum\pgfmathresult=0
            \draw[thick, blue, fill=blue] (\v*\Amid:\Rmid) circle (\Rmid/2/10);
          \else
            \draw[thick, red, fill=red] (\v*\Amid:\Rmid) circle (\Rmid/2/10);
          \fi
        \end{scope}
      }

      \foreach \v in {0,1,2,...,5}{
        \begin{scope}[xshift = 4cm, yshift = -3.25cm]
          \pgfmathparse{int(mod(\v, 3))}
          \draw[dashed, ->, black, shorten >=2pt] (\v*\Amid:\Rmid) -- (\Amid+\v*\Amid:\Rmid);
          \draw[->, thick, black, shorten >=2pt] (\v*\Amid:\Rmid) -- (2*\Amid + \v*\Amid:\Rmid);
          \ifnum\pgfmathresult=0
            \draw[thick, blue, fill=blue] (\v*\Amid:\Rmid) circle (\Rmid/2/10);
          \else
            \draw[thick, red, fill=red] (\v*\Amid:\Rmid) circle (\Rmid/2/10);
          \fi
        \end{scope}
      }
    \end{tikzpicture}
    \caption{Crater $C_{3,1}(5)$ corresponding to adding a $\Gamma_1(5)$-level structure to the $3$-isogeny graph containing $j(E) = 8.$ Blue vertices correspond to principal vertices of $E.$ Solid arrows represent isogenies induced by $\mathfrak{l},$ and dashed lines represent isogenies induced by $\overline{\mathfrak{l}}.$}\label{fig:eg-Gamma1-crater-j8}
  \end{figure}

  We factor $\ell, N$ into prime $\mathcal{O}$-ideals as
  \begin{align*}
    \ell\mathcal{O} &= \mathfrak{l} \cdot \overline{\mathfrak{l}} = (3, 2\Phi + 1)(3, 2\Phi), \\
    N\mathcal{O} &= \mathfrak{N} \cdot \overline{\mathfrak{N}} = (5, 2\Phi + 1)(5, 2\Phi + 2).
  \end{align*}
  In $Cl(K),$ $[\mathfrak{l}]$ has order $3$ and $\mathfrak{l}^3 = \lambda\mathcal{O} = (-2\Phi + 5).$ For a $\Gamma_1(N)$-level structure $P,$ we have three cases to consider:
  \begin{enumerate}[label=(\roman*)]
  \item $P$ is identified with an element of $(\mathcal{O}/N\mathcal{O})^{\times};$
  \item $P$ is identified with an element of $\mathfrak{N}$ or;
  \item $P$ is identified with an element of $\overline{\mathfrak{N}}.$
  \end{enumerate}
  For the first case, we have $\#(\mathcal{O}/N\mathcal{O})^{\times}/2 = 8.$ We let $\tilde{\lambda}$ denote the equivalence class of $(\mathcal{O}/N\mathcal{O})^{\times}$ containing $\lambda$ and compute
  \begin{align*}
    \tilde{\lambda} &= -2\Phi + 5 \equiv -2\Phi \mod{N} \\
    \tilde{\lambda}^2 &= -16\Phi + 13 \equiv -\Phi + 3 \mod{N} \\
    \tilde{\lambda}^4 &= -160\Phi - 599 \equiv 1 \mod{N},
  \end{align*}
  and so $\tilde{\lambda}$ has order $4$ and we get two components of size $12.$ Performing the same computations with $\overline{\mathfrak{l}}$ we find that the projection of $\overline{\lambda}$ in $(\mathcal{O}/N\mathcal{O})^{\times}$ also has order $4.$ One may also show that $\overline{\mathfrak{l}} \equiv \mathfrak{l}^5 \mod N$ to construct the other half of the edges in \Cref{fig:eg-Gamma1-crater-j8}.

  The other two cases we need to consider are when $P$ corresponds to an element of either $\mathfrak{N}$ or $\overline{\mathfrak{N}}.$ If $P$ corresponds to an element of $\mathfrak{N},$ then we need to consider the order of $\tilde{\lambda}$ in $(\mathcal{O}/\overline{\mathfrak{N}})^{\times}.$ In the quotient ring $(\mathcal{O}/\overline{\mathfrak{N}})^{\times},$ $-2\Phi \equiv 3$ and so $\tilde{\lambda} \equiv 3$ which squares to $-1$ and stabilises $P.$ This gives us one component of size six. In the quotient ring $(\mathcal{O}/\mathfrak{N})^{\times},$ $-2\Phi \equiv 4$ and so $\tilde{\lambda} \equiv -1$ which gives two components of size $3.$ To obtain the other half of the edges, we perform the same computations with $\overline{\mathfrak{l}}$ and find that the projection of $\overline{\lambda}$ has order $3$ and $6$ in $(\mathcal{O}/\mathfrak{N})^{\times}$ and $(\mathcal{O}/\overline{\mathfrak{N}})^{\times}$ respectively and compute that $\mathfrak{l}^{2} \equiv \overline{\mathfrak{l}} \mod \mathfrak{N}$ and $\overline{\mathfrak{l}}^2 \equiv \mathfrak{l} \mod \overline{\mathfrak{N}}.$
\end{eg}

\bibliographystyle{plain}
\bibliography{refs}

\begin{thebibliography}{10}

\bibitem{Arpin-level-structure}
Sarah Arpin.
\newblock Adding level structure to supersingular elliptic curve isogeny
  graphs.
\newblock {\em arXiv preprint arXiv:2203.03531}, 2022.

\bibitem{class-group-action-oriented}
Sarah Arpin, Wouter Castryck, Jonathan~Komada Eriksen, Gioella Lorenzon, and
  Frederik Vercauteren.
\newblock Generalized class group actions on oriented elliptic curves with
  level structure, 2024.

\bibitem{charles-lauter-goren}
Denis~X Charles, Kristin~E Lauter, and Eyal~Z Goren.
\newblock Cryptographic hash functions from expander graphs.
\newblock {\em Journal of CRYPTOLOGY}, 22(1):93--113, 2009.

\bibitem{Colo}
Leonardo Col\`o.
\newblock {\em Oriented supersingular elliptic curves and class group actions}.
\newblock Phd thesis, Aix-Marseille Universit\'e, November 2022.
\newblock Available at
  \url{https://www.leonardocolo.com/documents/thesis/PhD_Thesis.pdf}.

\bibitem{cox}
D.A. Cox.
\newblock {\em Primes of the Form $x^2+ny^2$: Fermat, Class Field Theory, and
  Complex Multiplication}.
\newblock Pure and Applied Mathematics: A Wiley Series of Texts, Monographs and
  Tracts. Wiley, 2013.

\bibitem{defeo-level-structure}
Luca De~Feo, Tako~Boris Fouotsa, and Lorenz Panny.
\newblock Isogeny problems with level structure.
\newblock In {\em Annual International Conference on the Theory and
  Applications of Cryptographic Techniques}, pages 181--204. Springer, 2024.

\bibitem{distort-volcano}
Mireille Fouquet, Josep~M Miret, and Javier Valera.
\newblock Distorting the volcano.
\newblock {\em Finite Fields and Their Applications}, 49:108--125, 2018.

\bibitem{Fouquet-Morain}
Mireille Fouquet and Fran{\c{c}}ois Morain.
\newblock Isogeny volcanoes and the sea algorithm.
\newblock In {\em Algorithmic Number Theory: 5th International Symposium,
  ANTS-V Sydney, Australia, July 7--12, 2002 Proceedings 5}, pages 276--291.
  Springer, 2002.

\bibitem{CSIDH-w-LS}
Steven~D Galbraith, Derek Perrin, and Jos{\'e}~Felipe Voloch.
\newblock Csidh with level structure.
\newblock {\em Cryptology ePrint Archive}, 2023.

\bibitem{Kohel}
David~Russell Kohel.
\newblock {\em Endomorphism rings of elliptic curves over finite fields}.
\newblock University of California, Berkeley, 1996.

\bibitem{Lei-Muller}
Antonio Lei and Katharina Müller.
\newblock On ordinary isogeny graphs with level structure, 2023.

\bibitem{Shai}
Shai Levin.
\newblock {\em Lifted elliptic curve isogeny graphs}.
\newblock University of Canterbury, 2021.

\bibitem{narkiewicz_ant}
Władysław Narkiewicz.
\newblock {\em Elementary and analytic theory of algebraic numbers}.
\newblock Springer, Berlin, 3rd ed edition, 2011.
\newblock OCLC: 751525065.

\bibitem{Roda-thesis}
Megan Roda.
\newblock {\em Supersingular isogeny graphs with level N structure and path
  problems on ordinary isogeny graphs}.
\newblock McGill University (Canada), 2019.

\bibitem{AAEC}
Joseph~H Silverman.
\newblock {\em Advanced topics in the arithmetic of elliptic curves}, volume
  151.
\newblock Springer Science \& Business Media, 1994.

\bibitem{Sutherland-volcanoes}
Andrew Sutherland.
\newblock Isogeny volcanoes.
\newblock {\em The Open Book Series}, 1(1):507–530, November 2013.

\bibitem{sage}
{The Sage Developers}.
\newblock {\em {S}ageMath, the {S}age {M}athematics {S}oftware {S}ystem
  ({V}ersion 10.4)}, 2024.
\newblock {\tt https://www.sagemath.org}.

\end{thebibliography}
\end{document}